\documentclass[11pt]{amsart}

\usepackage{amsmath, amssymb, mathtools, mathpazo, euler, microtype}
\usepackage{graphicx}
\usepackage[usenames]{xcolor}
\usepackage{float}
\usepackage[inline]{asymptote}

\usepackage[lined,algonl,boxed,norelsize]{algorithm2e}
\usepackage[noend]{algpseudocode} 

\usepackage[margin=1 in]{geometry}
\parindent=10pt
\parskip=5pt

\linespread{1.15}
%to print double spacedly: 
%\linespread{2}

\definecolor{darkblue}{rgb}{0, 0, 0.7}
\definecolor{darkred}{rgb}{0.7, 0, 0}
\usepackage[colorlinks,linkcolor=darkred, citecolor=darkblue, pagebackref=true,pdftex]{hyperref}

\newcommand\defi[1]{\textit{\color{blue}#1}}    % Definitions in the text
     % notes

\newcommand\R{\mathbb{R}}
\newcommand\Z{\mathbb{Z}}
\newcommand\N{\mathbb{N}}
\newcommand\Q{\mathbb{Q}}

\newcommand\Lap{\operatorname{\mathcal{L}}}
\newcommand\Basis{\operatorname{\mathcal{B}}}
\newcommand\Cut{\operatorname{\mathcal{C}}}
\newcommand\Flow{\operatorname{\mathcal{F}}}

\renewcommand \deg{\operatorname{deg}}
\newcommand\im{\operatorname{im}}
\newcommand\spn{\operatorname{span}}
\newcommand\define{\vcentcolon=}

\newtheorem*{thm*}{Theorem}
\newtheorem{thm}{Theorem}[section]
\newtheorem*{prop*}{Proposition}
\newtheorem{prop}[thm]{Proposition}
\newtheorem{lemma}[thm]{Lemma}

\newtheorem{cor}[thm]{Corollary}
\newtheorem*{cor*}{Corollary}
\newtheorem{defn}[thm]{Definition}

\newtheorem{example}[thm]{Example}
\newtheorem{question}[thm]{Question}
\newtheorem*{question*}{Question}

\begin{asydef}
    unitsize(1cm);
    pair[] points = {(0, 0), (0, 1), (1, 1), (1, 0)};
    int[][] conns = {{0, 1}, {1, 2}, {2, 3}, {3, 0}, {0, 2}};
    int[][] connsT = {{0, 1}, {0, 2}, {0, 3}, {1, 2}, {2, 3}};
    pair[] dirs = {SW, NW, NE, SE};
    pair[] es = {(1/4, 3/4), (3/4, 1/4), (1/2, 5/3)};

    path[] edges;
    for(int i=0; i<conns.length; ++i)
        edges.push(points[conns[i][0]] -- points[conns[i][1]]);

    path[] edgesT;
    for(int i=0; i<connsT.length; ++i)
        edgesT.push(points[connsT[i][0]] -- points[connsT[i][1]]);

    void drawgraph(pen pe=defaultpen) {
        for(int i=0; i<edges.length; ++i)
            draw(edges[i], p=pe);
        for(int i=0; i<points.length; ++i) {
            dot(points[i], p=(i>0?pe:pe+red)+6);
        }
    }

    void drawgrapharrows(pen pe=defaultpen) {
        for(int i=0; i<edgesT.length; ++i)
            draw(edgesT[i], p=pe, MidArrow);
        for(int i=0; i<points.length; ++i) {
            dot(points[i], p=(i>0?pe:pe+red)+6);
        }
    }

    void labelgraph() {
        for(int i=0; i<points.length; ++i) {
            label(points[i], "{\large $v_" + (string)i + "$}", dirs[i]);
        }
    }
    void labelgraph(string[] values) {
        for(int i=0; i<points.length; ++i) {
            label(points[i], "{\large $v_" + (string)i + "," + values[i] + "$}", dirs[i]);
        }
    }
    void drawdualverts(pen pe, real size=6) {
        for(int i=0; i<es.length; ++i)
            dot(es[i], p=pe+size);
    }
    void drawdualedges(pen pe) {
        draw(es[0] -- es[1], p=pe);
        draw(es[0] .. midpoint(edges[0]) .. (-.5, 1.3) ..  es[2], p=pe);
        draw(es[0] .. midpoint(edges[1]) .. es[2], p=pe);
        draw(es[1] .. midpoint(edges[2]) .. (1.5, 1) .. es[2], p=pe);
        draw(es[1] .. midpoint(edges[3]) .. (-.5, -.5) .. (-1, 1) .. (0, 2) .. es[2], p=pe);
    }
    void drawdual(pen pe=defaultpen) {
        drawdualedges(pe);
        drawdualverts(pe);
    }

    pair cis(real angle) {
        return (cos(angle), sin(angle));
    }

    pair a=cis(0.5*pi);
    pair b=cis(0.1*pi);
    pair c=cis(1.7*pi);
    pair d=cis(1.3*pi);
    pair e=cis(0.9*pi);

    void drawfive(real size=3) {
        dot(a, black+size);
        dot(b, black+size);
        dot(c, black+size);
        dot(d, black+size);
        dot(e, black+size);
    }

    pair u=(0, 0);
    pair v=(0, 1);
    pair w=(0, 2);
    pair x=(1, 0);
    pair y=(1, 1);
    pair z=(1, 2);
    void drawsix(real size=3) {
        dot(u, black+size);
        dot(v, black+size);
        dot(w, black+size);
        dot(x, black+size);
        dot(y, black+size);
        dot(z, black+size);
    }
\end{asydef}

\begin{document}

\title{Integral flow and cycle chip-firing on graphs}

\author{Anton Dochtermann, Eli Meyers, Raghav Samavedam, Alex Yi}

\begin{abstract}
Motivated by the notion of chip-firing on the dual graph of a planar graph, we consider `integral flow chip-firing' on an arbitrary graph $G$.  The chip-firing rule is governed by $\Lap^*(G)$, the dual Laplacian of $G$ determined by choosing a basis for the lattice of integral flows on $G$.  We show that any graph admits such a basis so that $\Lap^*(G)$ is an $M$-matrix, leading to a firing rule on these basis elements that is avalanche finite.  This follows from a more general result on bases of integral lattices that may be of independent interest.  Our results provide a notion of $z$-superstable flow configurations that are in bijection with the set of spanning trees of $G$. We show that for planar graphs, as well as for the graphs $K_5$ and $K_{3,3}$, one can find such a flow M-basis that consists of cycles of the underlying graph.  We consider the question for arbitrary graphs and address some open questions.
\end{abstract}
%\keywords{}
%\subjclass[2010]{}

\date{\today}

\maketitle

\section{Introduction}\label{sec:intro}
The classical theory of \emph{chip-firing} involves a simple game played on the vertices of a graph. Versions of these games play a role in physics in the context of self-organized criticality \cite{BakTanWie, Dha}, and also have been studied for their combinatorial properties and underlying algebraic structure \cite{Big, BjoLovSho}. More recently chip-firing has seen connections to various other disciplines including algebraic geometry \cite{BakNor}. We refer to \cite{CorPer} and \cite{Kli} for good introductions to the subject.  

In one variant of the game, a vertex is designated to be the \emph{root}, and a nonnegative configuration of chips ${\bf c} = (c_1, c_2, \dots, c_n) \in{\N}^n$ is placed on the non-root vertices. If the number of chips on a non-root vertex $i$ is greater than its degree, the vertex $i$ can \emph{fire}, in which case the vertex passes chips to each of its neighbors (one for each edge connected it to $i$).

A configuration is \emph{stable} if no non-root vertex can fire, and is \emph{critical} if it is stable and recurrent (see below for proper definitions).  The set of critical configurations on a graph $G$ forms an abelian group $\kappa(G)$ under coordinate-wise addition and \emph{stabilization}, known as the \emph{critical group} of $G$.  One can show that $\kappa(G)$ does not depend on the choice of root vertex in the graph $G$. Although easy to define, $\kappa(G)$ is a subtle invariant of the graph $G$ and for instance there has been recent interest in determining the distribution of critical groups among random graphs \cite{Woo}. It is also well-known that the set of critical configurations on $G$ has a simple duality with the \emph{superstable} configurations of that graph (known in some contexts as \emph{$G$-parking functions}).

The dynamics of chip-firing on a graph $G$ can be encoded by the reduced \emph{Laplacian} matrix $\tilde \Lap = \tilde \Lap(G)$, where the effect of firing a vertex $i$ can be seen to correspond to subtracting the $i$th row of $\tilde \Lap$.  If we think of $\tilde \Lap: \Z^{n-1} \rightarrow \Z^{n-1}$ as a linear operator, one can prove that the critical group $\kappa(G)$ is recovered as the cokernel of the reduced Laplacian:
\[
    \kappa(G) \cong \Z^{n-1}/\im \tilde\Lap.
\]

By the well-known matrix tree theorem, we have that $|\kappa(G)|=|\tau(G)|$, where $\tau(G)$ is the set of spanning trees of $G$ (recall that $G$ is assumed to be connected).  In particular the set of superstable configurations is in bijection with $\tau(G)$. Merino \cite{Mer} has shown that this bijection can be chosen to preserve degree/activity, which connects the theory of chip-firing on a graph $G$ to properties of the Tutte polynomial $T_G(x,y)$.

A more general notion of `chip-firing' can be encoded in any $n \times n$ integer matrix $L$ that has positive entries on the diagonal and non-positive entries on the off-diagonal.  Chip-firing at a site $i$ is then defined by subtracting $L^T {\bf e}_i$, the $i$th row of $L$, from the current configuration.  One is interested in such matrices $L$ with the property that all initial configurations eventually stabilize by repeated firings determined by the matrix $L$.  This `avalanche finite' property  is equivalent to $L$ being an \emph{$M$-matrix} (see Section \ref{sec:Energy}). Building on work of Baker and Shokrieh from \cite{BakSho}, Guzm\'{a}n and Klivans have shown \cite{GuzKli} that a certain `energy minimizing' perspective on $M$-matrices leads to good notions of \emph{$z$-superstable} configurations, where a notion of firing multisets of vertices simultaneously is relevant.  In particular, the set of $z$-superstable configurations are in bijection with the equivalence classes defined by the matrix $L$.

Returning to the world of graphs, we see that if a graph $G$ is embedded in the plane with dual graph $G^*$, then a configuration on the vertices of $G^*$ can naturally be thought be modelled as a `cycle configuration' on $G$.  After fixing an orientation on $G$ and an orientation of the plane, firing a vertex in $G^*$ then corresponds to firing a directed cycle in $G$.  A cycle $C$ sends chips to any cycle $C^\prime$ that it shares at least one edge with, with the total number of chips sent determined by the number of edges that are oriented consistently versus inconsistently.  In addition, we see that the number of `superstable cycle configurations' (i.e. the superstable configurations of $G^*$) are in bijection with the number of spanning trees of $G$, where the number of trees with $k$ \emph{internally} passive edges is equal to the number of superstable cycle configurations of degree $k$.

What happens if $G$ is not planar? In this paper, we study the notion of cycle (and, more generally, integral flow) chip-firing for general (not necessarily planar) graphs.  We adopt the perspective that traditional chip-firing on $G$ can be viewed as choosing a particular basis for the \emph{lattice of integral cuts} $\Cut(G)$, given by all vertex cuts determined by non-root vertices.  Choosing this basis for $\Cut(G)$ then defines the reduced incidence matrix $\tilde \partial$ and $\tilde \partial \circ \tilde \partial^T = \tilde \Lap$ recovers the reduced Laplacian, which we have seen governs the usual rules of chip-firing.  

In the dual case we want to choose a basis for $\Flow(G) = \ker \partial \cap \Z^m$, the integral kernel of the (signed) incidence matrix $\partial$ (sometimes called the \emph{lattice of integral flows} for the graph $G$, see \cite{BacDelNag}).  If ${\mathcal B} = \{{\bf f}_1, \dots, {\bf f}_g\}$ is such a basis we let $\iota$ denote the $m \times g$ matrix with columns given by the ${\bf f}_i$. This  gives rise to a dual Laplacian $ \iota^T \iota = \Lap^*: \Z^g \rightarrow \Z^g$, where $g = |E| - |V| +1$ is the \emph{genus} of the connected graph.

We first observe that this dual Laplacian can be used to recover the critical group of the graph as $\kappa(G) \cong \Z^g/\im \Lap^*$ (see Proposition~\ref{prop:dualgroup}). This result follows from established properties of integral lattices, for example from \cite{BacDelNag}, and is also used in \cite{Jac} (a fact we learned after preparing this note).  In Section~\ref{sec:dual} we give a mostly self-contained proof. 

Although any integral basis for $\Flow(G)$ gives rise to a dual Laplacian $\Lap^*$, this matrix will not in general define a satisfactory avalanche-finite firing rule.   We would like to interpret $\Lap^*$ as governing a chip-firing rule on the chosen basis elements, with associated notions of $z$-superstable configurations and `good' representatives for each chip-firing equivalence class.  For this we need the matrix $\Lap^*$ to be an $M$-matrix, and our next result shows that this can always be achieved.

\newtheorem*{cor:Mmatrix}{Corollary \ref{cor:Mmatrix}}
\begin{cor:Mmatrix}
Suppose $G$ is any graph.  Then there exists a basis ${\mathcal B} = \{{\bf f}_1, \dots, {\bf f}_g\}$ for its lattice of integral flows such that the associated dual Laplacian $\Lap^*$ is an $M$-matrix.
\end{cor:Mmatrix}

A basis for $\Flow(G)$ satisfying the conditions in Corollary~\ref{cor:Mmatrix} will be called an \emph{flow $M$-basis} (or simply \emph{flow $M$-basis}) for $G$. Corollary~\ref{cor:Mmatrix} follows from a more general result (Theorem~\ref{thm:latticebasis}) regarding bases of any integral lattice that may be of independent interest.  Our proof of Theorem~\ref{thm:latticebasis} is constructive, but our algorithm can lead to complicated linear combinations of integer vectors with many large entries. To interpret our basis in terms of `cycle chip-firing', we would like the elements in our flow $M$-basis to encode directed cycles, that is to consist of $0, -1, 1$ entries with the nonzero entries corresponding to the edges of some (simple) cycle in the graph, with the sign determined by their orientation with respect to the fixed orientation on $G$.

For the case of planar graphs we see that such a \emph{cycle $M$-basis} is given by the boundaries of the bounded faces (appropriately oriented) in some embedding of $G$, see Proposition \ref{prop:planarcircuit}. It is then natural to ask if any graph $G$ admits an $M$-basis consisting of only cycles. This question is still open but in Section~\ref{sec:circuit} we show that such bases do exist for the case of $G = K_5$ and $G = K_{3,3}$ (found via a computer search).

\newtheorem*{prop:circuitK5K33}{Proposition \ref{prop:circuitK5K33}}
\begin{prop:circuitK5K33}
The graphs $K_5$ and $K_{3,3}$ both admit cycle $M$-bases.
\end{prop:circuitK5K33}

We also prove some results concerning the structure that cycle $M$-bases must satisfy. For instance we show that if $K_n$ has a cycle $M$-basis then some cycle must contain at least 4 edges, see Proposition~\ref{prop:triangles}. Similarly, any cycle $M$-basis for $K_{m,n}$ with $m \geq n \geq 3$ must contain a $6$-cycle.

A choice of flow $M$-basis for a graph $G$ defines a dual Laplacian $\Lap^*$ that governs an avalanche finite chip-firing process.  By results of \cite{GuzKli} this then leads to notions of $z$-superstable configurations, unique energy-minimizing elements in the equivalence classes defined by $\Lap^*$. By Proposition~\ref{prop:dualgroup} we know the size of this set of equivalence classes and hence we have the following.

\newtheorem*{prop:zsuper}{Proposition \ref{prop:zsuper}}
\begin{prop:zsuper}
Let $G$ be a connected graph, and let ${\mathcal B} = \{{\bf f}_1, \dots, {\bf f}_g\}$ be a flow $M$-basis with associated dual Laplacian $\Lap^*$.  Then the number of $z$-superstable cycle configurations of $G$ (with respect to $\Lap^*$) is given by $|\tau(G)|$, the number of spanning trees of $G$.
\end{prop:zsuper}

In the case of traditional chip-firing the degree sequence of superstable configurations counts the number of spanning trees via the number of \emph{externally passive} edges (and in particular is independent of the choice of sink vertex).  If $G$ is a planar graph then a natural choice of cycle $M$-basis given by a planar embedding leads to a dual Laplacian ${\mathcal L}^*$ whose $z$-superstable configurations count spanning trees by \emph{internal passivity}. In the general case the interpretation is not so clear. In particular the degree sequence of $z$-superstables depends on the choice of integral flow $M$-basis for $\Flow(G)$.  One wonders if there is a notion of `activity' of a spanning tree that depends on the choice of basis for $\Flow(G)$ and that is reflected in the degree sequence of the associated $z$-superstable flow configurations .  We leave this for a future project.

The rest of the paper is organized as follows.  In Section~\ref{sec:intro} we review some basic notions from the classical theory of chip-firing on graphs as well as the theory of generalized chip-firing defined by $M$-matrices.  In Section~\ref{sec:dual} we discuss our approach to integral flow chip-firing in more detail and prove that any graph admits a flow $M$-basis.  In Section~\ref{sec:circuit} we show that $K_5$ and $K_{3,3}$ both admit flow $M$-bases consisting of cycles and discuss some other examples. Here we also show that any cycle $M$-bases for nonplanar $K_n$ and $K_{m,n}$ must have large cycles. We end with some further discussion and open questions.

{\bf Acknowledgements.} The authors would like to thank Johnny Guzm\'an, Carly Klivans, and Chi Ho Yuen for helpful comments and discussions.  We are also grateful to two anonymous referees whose corrections and suggestions lead to substantial improvements in the paper.  Most examples and experiments were computed using SageMath \cite{Sag}, and we thank Dave Perkinson for sharing his code regarding $z$-superstables. The work presented here was initiated as part of Summer Honors Camp within the Mathworks Program at Texas State University.  We thank Mathworks and Texas State for the support and productive working environment.

\section{Basics of classical chip-firing} \label{sec:basics}
Here we fix some notation and recall some results from the classical theory of chip-firing on graphs, for the most part following conventions from \cite{CorPer}.  Throughout the paper we let $G = (V,E)$ denote a connected undirected simple graph on vertex set $V = \{0,1,\dots, n\}$, with the vertex $0$ designated as the root.  For $u \in V$ we let $\deg(u)$ denote the degree of the vertex, so that 
\[
    deg(u) = |\{w \in V: uw \in E\}|.
\]

A \defi{configuration} of chips on the (non-root) vertices of $G$ is a vector $ {\bf c} = (c_1, c_2, \dots, c_n) \in{\N}^n$. A non-root vertex $i$ can \defi{fire} when $c_i \geq \deg(u)$, so that the number of chips is great than or equal to the number of vertices that are adjacent to $i$.  In this case the vertex passes chips to each of its neighbors (one for each edge connecting it to $i$), resulting in a new configuration ${\bf c}^\prime$.   

A first question to ask is which configurations can be reached from a given configuration via these firing rules. A configuration ${\bf c}$  is \defi{stable} if no non-root vertex can fire, i.e. $c_i < \deg(i)$ for all vertices $i = 1,2, \dots, n$.  In this case the root vertex can fire, passing a chip to each vertex adjacent to $0$.  A configuration ${\bf c}$ is \defi{recurrent} if it is stable and reappears in this process; that is after firing the root vertex there exists a sequence of firings that returns to ${\bf c}$.  The set of stable and recurrent configurations form a group $\kappa(G)$ under coordinatewise addition (and stabilization), called the \defi{critical group} of $G$.  It is well known that $\kappa(G)$ is a finite abelian group whose order is given by the number of spanning trees of $G$, although determining the structure of $\kappa(G)$ is not easy.

Dually, a configuration ${\bf c}$ is said to be \defi{superstable} if no set of (non-root) vertices can fire, so that for all $\sigma \subset \{1,2, \dots, n\}$ we have $c_i < \deg_\sigma(i)$, where
\[
    \deg_\sigma(i) = |\{j \in [n] \backslash \sigma: ij \in E(G)\}|
\]
is the number of vertices outside $\sigma$ that are adjacent to $i$.  The set of superstable configurations are also called `$G$-parking functions' in some contexts.  It turns out that there is a simple duality between superstable configurations and critical configurations.  If we let ${\bf k} = (\deg(1)-1, \deg(2)-1, \dots, \deg(n)-1)$ denote the \emph{canonical configuration} of the graph $G$, one can show that a configuration ${\bf c}$ is superstable if and only of ${\bf k} - {\bf c}$ is critical.  In particular the maximal superstables (under a natural partial order given by coordinatewise comparison) are in bijection with the minimal critical configurations of the graph $G$.

\subsection{Linear algebra}

We can rephrase many of the constructions from above in terms of some underlying linear algebra.  If $G = (V,E)$ is our graph with $V = \{0,1, \dots, n\}$ we let $\partial$ denote its (signed) incidence matrix.  Here we orient the edges of $G$ so that an edge $e = ij$ has $i \leftarrow j$ if $i < j$; hence the column of $\partial$ corresponding to $e$ has a $-1$ in the row corresponding to $i$ and a $1$ in the row corresponding to $j$ (and all other entries $0$).  Removing the row of $\partial$ corresponding to the sink vertex gives the reduced incidence matrix $\tilde \partial$.  By definition the \defi{reduced Laplacian} matrix $\tilde\Lap$  of $G$ is given by
\[
    \tilde \Lap \define \tilde \partial \tilde \partial^t.
\]

One can see that this matrix plays the role of chip-firing in the dynamical system described above.  In particular, given a configuration of chips ${\bf c}$ we have that firing a vertex $i$ corresponds to subtracting the $i$th column of $\tilde \Lap$ from ${\bf c}$ to obtain a new configuration ${\bf c}^\prime$:
\[
    {\bf c}^\prime = {\bf c} - \tilde \Lap {\bf e}_i,
\]
where ${\bf e}_i$ is the standard basis vector corresponding to the vertex $i$. Hence a configuration ${\bf c}^\prime$ is obtainable from another configuration ${\bf c}$ via a sequence of firings if ${\bf c} - {\bf c}^\prime \in \im \tilde \Lap$, in which case we write ${\bf c} \sim {\bf c}^\prime$. Furthermore there is an isomorphism of groups
\[
    \kappa(G) \cong \Z^{|V|-1} / \im \tilde \Lap.
\]

In particular we see that $\kappa(G)$ is a finite abelian group and (by the matrix tree theorem) has cardinality given by the number of spanning trees of $G$. 

\begin{example}\label{ex:running}
Consider the graph $G$ on vertex set $\{v_0,v_1,v_2,v_3\}$ depicted in Figure \ref{fig:diamond_graph}, with edge set $\{01,02,03, 12, 23\}$ (here we are suppressing set brackets and using only subscripts, so that for example the edge $\{v_1, v_2\}$ is denoted $12$). According to the orientation described above the relevant matrices are
\smallskip
\[
    \tilde{\partial} = \begin{bmatrix}
        1 & 0 & 0 & -1 &  0 \\
        0 & 1 & 0 &  1 & -1 \\
        0 & 0 & 1 &  0 &  1
    \end{bmatrix}
    \hspace{.2 in}
    \tilde\Lap = \begin{bmatrix}
         2 & -1 &  0 \\
        -1 &  3 & -1 \\
         0 & -1 &  2
    \end{bmatrix}
\]

\begin{figure}[h] 

\includegraphics[scale = 1]{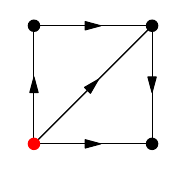}
\caption{An orientation of the Diamond graph.}
\label{fig:diamond_graph}
\end{figure}
\end{example}

\subsection{Energy minimizers and M-matrices}\label{sec:Energy}
The superstable configurations on a graph correspond to those configurations in which no set of vertices can fire simultaneously (without the number of chips at some vertex becoming negative).  More recently it was shown that superstable configurations can equivalently be seen as the unique energy minimizers among the elements in the equivalence class determined by the Laplacian (\cite{BakSho}, \cite{GuzKli}).  Here one defines a norm on chip configurations defined by the (inverse of) the reduced Laplacian.  In \cite{BakSho} Baker and Shokrieh show that the energy minimizers correspond to superstable configurations, and hence are unique per equivalence class.

In an attempt to expand the notion of chip-firing to more general contexts, Gabrielov (\cite{Gab}) studied a class of \emph{abelian avalanche models}, extending work of Dhar (\cite{Dha}).  Under this model we fix a finite set $V$ (here taken to be $[n] = \{1, \dots, n\}$) and a \emph{redistribution matrix} $\Delta$ with indices in $V$ satisfying
\[
    \Delta_{ii} > 0, \text{ for all $i$};
    \; \;
    \Delta_{ij} \leq 0, \text{ for $j \neq i$}.
\]

Inspired by considerations in physics, this captures the idea that firing a state involves losing chips at that site and increasing chips at neighboring sites. A vector ${\bf h} = (h_1, \dots, h_n) \in \Z^n$ defines a configuration, and a site $i$ is allowed to \emph{fire} if $h_i \geq \Delta_{ii}$.  In this case firing the site $i$ is defined by replacing ${\bf h}$ with the vector ${\bf h} - \Delta^T{\bf e}_i$, subtracting the $i$th row of $\Delta$ from the configuration.  Two configurations ${\bf c}$ and ${\bf d}$ are \defi{equivalent} if their difference ${\bf c} - {\bf d}$ is in $\im \Delta$, the $\Z$-image of the matrix $\Delta$.

Mimicking the setup for graphs we say that a configuration ${\bf c} = (c_1, \dots, c_n)$ is \defi{stable} if $c_i < \Delta_{ii}$ for all $i$.  If $\Delta = \tilde \Lap(G)$ is the reduced Laplacian of a graph $G$ we know any sequence of firing moves applied to any initial chip configuration ${\bf c}$ will eventually lead to a stable configuration.  It is then a natural question to ask which matrices $\Delta$ have this analogous \defi{avalanche finite} property.

Suppose $L$ is any $n \times n$ matrix.  We say that $L$ is a  \defi{Z-matrix} if $L_{ij} \leq 0$ for all $i \neq j$.   Then from \cite{GuzKli} we have the following.

\begin{prop}\cite[Definition 2.2]{GuzKli}\label{prop:Mmatrix}
Suppose $L$ is an $n \times n$ Z-matrix.  Then the following are equivalent.

\begin{enumerate}
    \item $L$ is avalanche finite.
    \item The real part of the eigenvalues of $L$ are all positive.
    \item The inverse $L^{-1}$ exists and all entries of $L^{-1}$ are non-negative.
    \item There exists a vector ${\bf x} \in {\R}^n$ with ${\bf x} \geq 0$ such that $L{\bf x}$ has all positive entries.
\end{enumerate}

\end{prop}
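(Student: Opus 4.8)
The plan is to reduce conditions (2)--(4) to a single spectral inequality via the Perron--Frobenius theorem, and then to tie the dynamical condition (1) to these by two direct chip-firing arguments. Since $L$ is a $Z$-matrix with positive diagonal, I would first write $L = sI - B$ with $s = \max_i L_{ii} > 0$ and $B = sI - L$. Then $B$ is entrywise nonnegative (its diagonal entries are $s - L_{ii} \ge 0$ and its off-diagonal entries are $-L_{ij} \ge 0$), so Perron--Frobenius applies: the spectral radius $\rho(B)$ is an eigenvalue of $B$ admitting a nonnegative right eigenvector, $B^T$ likewise has a nonnegative eigenvector for $\rho(B)$, and every eigenvalue $\mu$ of $B$ obeys $\operatorname{Re}(\mu) \le |\mu| \le \rho(B)$. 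As the eigenvalues of $L$ are precisely the numbers $s - \mu$, the entire argument will pivot on the inequality $\rho(B) < s$.

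First I would show $(2) \Leftrightarrow \rho(B) < s$: the real parts of the eigenvalues of $L$ are the numbers $s - \operatorname{Re}(\mu)$, whose minimum is $s - \rho(B)$ (attained at $\mu = \rho(B)$), so they are all positive exactly when $\rho(B) < s$. Next, $\rho(B) < s \Rightarrow (3)$ follows from the Neumann series: since $\rho(B/s) < 1$, the sum $\sum_{k \ge 0}(B/s)^k$ converges to $(I - B/s)^{-1}$, and as each summand is nonnegative we get $L^{-1} = s^{-1}(I - B/s)^{-1} \ge 0$. For $(3) \Rightarrow (4)$ I would set $\mathbf{x} = L^{-1}\mathbf{1}$ for the all-ones vector $\mathbf{1}$; then $\mathbf{x} \ge 0$ because $L^{-1} \ge 0$, and $L\mathbf{x} = \mathbf{1} > 0$. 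Finally $(4) \Rightarrow \rho(B) < s$ is a Collatz--Wielandt computation: if $\mathbf{x} \ge 0$ satisfies $L\mathbf{x} > 0$, then $s\,x_i > (B\mathbf{x})_i \ge 0$ forces $\mathbf{x} > 0$, and pairing $L\mathbf{x}$ with a nonnegative left Perron eigenvector $\mathbf{u}$ of $B$ gives $(s - \rho(B))\,\mathbf{u}^T\mathbf{x} = \mathbf{u}^T L\mathbf{x} > 0$ with $\mathbf{u}^T\mathbf{x} > 0$, hence $\rho(B) < s$. This closes the equivalence of (2), (3), and (4).

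It remains to insert (1) into the chain. For $(4) \Rightarrow (1)$ I would use the strictly positive $\mathbf{x}$ from (4) as a Lyapunov weighting: firing site $i$ replaces the configuration $\mathbf{c}$ by $\mathbf{c} - L^T\mathbf{e}_i$, which decreases $\langle \mathbf{x}, \mathbf{c}\rangle$ by exactly $(L\mathbf{x})_i \ge \min_k (L\mathbf{x})_k > 0$. Crucially, each coordinate stays bounded below throughout the process (a coordinate only drops when its own site fires, starting from a value $\ge L_{ii}$ and landing at a value $\ge 0$), so $\langle \mathbf{x}, \mathbf{c}\rangle$ is bounded below; a quantity that strictly decreases by a fixed positive amount yet is bounded below can only do so finitely often, so every firing sequence terminates. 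For the converse I would prove the contrapositive $\neg(\rho(B) < s) \Rightarrow \neg(1)$: when $\rho(B) \ge s$, the cone $\{\mathbf{u} \ge 0 : L^T\mathbf{u} \le 0\}$ is nonzero (it contains a nonnegative Perron eigenvector of $B^T$), and since it is a rational polyhedral cone it contains a nonzero integer vector $\mathbf{u}$. Firing each site $i$ exactly $u_i$ times changes the configuration by $-L^T\mathbf{u} \ge 0$; starting from a sufficiently large configuration so that the whole round is legal, the configuration after the round dominates the one before, so the same round can be repeated forever, giving a nonterminating process.

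The routine parts are the Perron--Frobenius and Neumann-series manipulations of the second paragraph, which are standard. I expect the main obstacle to be the two dynamical implications involving (1): the $(4) \Rightarrow (1)$ direction hinges on the (easily overlooked) fact that coordinates remain bounded below so that the linear Lyapunov functional cannot decrease indefinitely, while the $(1) \Rightarrow (2)$ direction requires rationalizing the Perron eigenvector to an integer firing vector and then checking that firing from a large enough initial configuration keeps every intermediate firing legal. Verifying legality together with the monotonicity $-L^T\mathbf{u} \ge 0$ of a complete round is the crux of producing the infinite avalanche.
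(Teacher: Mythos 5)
Your proof is correct, but note that the paper does not actually prove this proposition: it is quoted from Guzm\'{a}n--Klivans, and the paper explicitly defers the algebraic equivalences to Plemmons \cite{Ple} and the connection with avalanche finiteness to Gabrielov \cite{Gab}. What you have written is essentially the standard argument from that literature, assembled into a self-contained proof: the splitting $L = sI - B$ with $B \geq 0$, Perron--Frobenius plus the Neumann series for the equivalence of (2), (3), (4), a linear Lyapunov functional $\langle \mathbf{x}, \cdot \rangle$ for $(4) \Rightarrow (1)$ (where the key point, which you correctly isolate, is that a coordinate decreases only when its own site fires and lands at a value $\geq 0$, so the functional is bounded below and can only drop by a fixed positive amount finitely often), and, for $\neg(2) \Rightarrow \neg(1)$, an integer vector in the rational polyhedral cone $\{\mathbf{u} \geq 0 : L^T\mathbf{u} \leq 0\}$ giving a firing round that can be repeated forever from a large enough configuration. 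All of these steps check out. The only loose end is at the very start: the statement assumes only that $L$ is a $Z$-matrix, so $s = \max_i L_{ii}$ need not be positive a priori. This is harmless --- positive diagonal is implicit in the avalanche model (the paper's redistribution matrices satisfy $\Delta_{ii} > 0$, and each of (2), (3), (4) forces $L_{ii} > 0$ for a $Z$-matrix) --- but the cleanest fix is to take $s = 1 + \max_i |L_{ii}|$, which keeps $B = sI - L$ entrywise nonnegative with no sign assumption on the diagonal. In short, your route supplies a complete proof where the paper offers only citations, at the cost of rehearsing material the authors regarded as standard.
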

The equivalence of the last three conditions can be found in Plemmons \cite{Ple}, and the connection to the first condition is due to Gabrielov \cite{Gab}. If any (and hence all) of the conditions in Proposition \ref{prop:Mmatrix} hold we say that $L$ is a \defi{non-singular $M$-matrix}.  Such matrices appear in disparate fields including economics, operations research, finite element analysis \cite{Ple}.

In \cite{GuzKli} it is shown that if $L$ is an $M$-matrix then energy-minimizing configurations exists and are unique per equivalence class.  Given a configuration ${\bf c} \in \Z^n$ one defines the energy (or norm) to be
\[
    E({\bf c}) = {\bf c}^T L {\bf c}.
\]
Recall that in this set up two configurations ${\bf f},{\bf g} \in \Z^n$ are considered equivalent if ${\bf g}-{\bf f} \in \im L$, in which case we write ${\bf f} \sim {\bf g}$.  A configuration ${\bf f}$ is effective, denoted ${\bf f} \geq 0$, if we have $f_i \geq 0$ for all $i = 1, \dots, n$.  Given any ${\bf f} \in \Z^n$ with ${\bf f} \geq 0$ we consider the minimization problem
\begin{equation} \label{eq:energy}
\min_{{\bf g} \sim {\bf f}, {\bf g} \geq 0} E({\bf g}).
\end{equation}

\begin{defn}
A configuration ${\bf f} \in \Z^n$ with ${\bf f} \geq 0$ is \defi{$z$-superstable} (with respect to an $M$-matrix $L$ if for every ${\bf z} \in \Z^n$ with ${\bf z} \geq 0$ and ${\bf z} \neq {\bf 0}$ there exists $i = 1, \dots, n$ such that
\[
    {\bf f}_i - (L{\bf z})_i < 0.
\]
\end{defn} 

In other words there is no `multiset-firing' that can be performed on the given configuration ${\bf f}$ without resulting in a negative value on some site. In \cite{GuzKli}, Guzm\'{a}n and Klivans show that $z$-superstables are energy minimizers in the following sense.  

\begin{thm}\cite{GuzKli}
Let $L$ be an $n \times n$ $M$-matrix.  A vector ${\bf f} \in \Z^n$ with ${\bf f} \geq 0$ is $z$-superstable if and only if it is the (unique) minimizer of
\[
    \min_{{\bf g} \sim {\bf f}, {\bf g} \geq 0} E({\bf g}).
\]
\end{thm}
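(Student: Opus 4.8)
The plan is to deduce the equivalence from two facts: that a legal firing of a nonempty nonnegative multiset strictly lowers the energy, and that each equivalence class contains at most one $z$-superstable configuration. Throughout, note that ${\bf g} \sim {\bf f}$ means precisely ${\bf g} = {\bf f} - L{\bf z}$ for some ${\bf z} \in \Z^n$, and that ${\bf f}$ fails to be $z$-superstable exactly when ${\bf f} - L{\bf z} \geq 0$ for some ${\bf z} \geq 0$ with ${\bf z} \neq {\bf 0}$ --- that is, when some nonempty multiset of sites can be fired while staying effective.

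First I would prove the energy lemma. Suppose ${\bf f} \geq 0$, ${\bf z} \geq 0$ with ${\bf z} \neq {\bf 0}$, and that ${\bf g} \define {\bf f} - L{\bf z} \geq 0$. Writing ${\bf f} = {\bf g} + L{\bf z}$ and expanding the energy pairing attached to $L$ (as in \cite{BakSho}) gives
\[
    E({\bf f}) - E({\bf g}) = 2\,{\bf g}^T {\bf z} + {\bf z}^T L {\bf z}.
\]
The first summand is nonnegative because ${\bf g} \geq 0$ and ${\bf z} \geq 0$, and the second is strictly positive because the quadratic form of $L$, a non-singular $M$-matrix, is positive definite. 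Hence $E({\bf f}) > E({\bf g})$: firing strictly lowers energy. Two consequences follow. Since $E$ is bounded below and takes values in a discrete set, each class has an effective representative ${\bf m}$ of minimum energy; and such a minimizer admits no legal firing, for otherwise the lemma would produce an effective representative of strictly smaller energy. Thus every energy-minimizing configuration is $z$-superstable.

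Next I would show that a class contains at most one $z$-superstable configuration. Given $z$-superstable ${\bf f} \sim {\bf f}'$, write ${\bf f}' = {\bf f} - L{\bf z}$ and split ${\bf z} = {\bf z}^+ - {\bf z}^-$ into its positive and negative parts, which have disjoint supports. Set ${\bf h} \define {\bf f} - L{\bf z}^+ = {\bf f}' - L{\bf z}^-$. Using that $L$ is a $Z$-matrix (nonpositive off-diagonal entries) together with the disjointness of the supports, one checks coordinate by coordinate that ${\bf h} \geq 0$: on the support of ${\bf z}^+$ one compares with ${\bf f}'$, off it one compares with ${\bf f}$, and in each case the omitted off-diagonal contributions are nonpositive. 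If ${\bf z}^+ \neq {\bf 0}$ this exhibits a legal firing of ${\bf f}$, contradicting $z$-superstability; if ${\bf z}^- \neq {\bf 0}$ it does the same for ${\bf f}'$. Hence ${\bf z} = {\bf 0}$ and ${\bf f} = {\bf f}'$.

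Combining the pieces, each class has an energy minimizer, that minimizer is $z$-superstable, and the $z$-superstable representative is unique; therefore a configuration is $z$-superstable if and only if it is the (unique) energy minimizer of its class, which is the assertion. The main obstacle is the uniqueness step: the sign bookkeeping in the support decomposition is what forces the difference of two $z$-superstable representatives to vanish, and it is here that the $Z$-matrix structure is used essentially. A secondary subtlety, already visible in the energy lemma, is to ensure the cross term $2\,{\bf g}^T{\bf z}$ enters with the correct sign --- that is, that one is genuinely firing (lowering energy) rather than un-firing --- which is exactly where effectivity of both ${\bf f}$ and ${\bf g}$ is needed.
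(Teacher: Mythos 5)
The paper itself offers no proof of this theorem --- it is quoted from \cite{GuzKli} --- so your argument can only be compared with the cited source. Your overall strategy is in fact the Guzm\'an--Klivans line of argument: (i) a legal firing of a nonempty nonnegative multiset strictly lowers energy, (ii) each equivalence class contains at most one $z$-superstable configuration, proved by splitting $ {\bf z} = {\bf z}^+ - {\bf z}^-$ into positive and negative parts and using the $Z$-matrix sign pattern coordinatewise, and (iii) existence of a minimizer by discreteness and boundedness below. Step (ii) as you wrote it is correct and is exactly where the $M$-matrix structure enters; the closing logic is also sound.

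There are, however, two concrete problems in step (i). First, your key identity $E({\bf f}) - E({\bf g}) = 2\,{\bf g}^T{\bf z} + {\bf z}^T L {\bf z}$ for ${\bf f} = {\bf g} + L{\bf z}$ is \emph{not} the expansion of the energy as this paper defines it, namely $E({\bf c}) = {\bf c}^T L\, {\bf c}$; it is the expansion of the inverse pairing $E({\bf c}) = {\bf c}^T L^{-1} {\bf c}$ (the Baker--Shokrieh pairing \cite{BakSho}, which is what \cite{GuzKli} actually use), and even then only when $L$ is symmetric. This is not a cosmetic point: with the paper's literal definition the theorem is false. Take $L = \begin{pmatrix} 2 & -1 \\ -1 & 5 \end{pmatrix}$; then ${\bf f} = (1,3)$ is $z$-superstable, ${\bf g} = (3,2) = {\bf f} + L{\bf e}_1 \sim {\bf f}$ is effective, yet ${\bf g}^T L\,{\bf g} = 26 < 41 = {\bf f}^T L\,{\bf f}$, so the $z$-superstable element does not minimize ${\bf c}^T L\,{\bf c}$ in its class (it does minimize ${\bf c}^T L^{-1}{\bf c}$: $29/9$ versus $65/9$). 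So you must state explicitly that you are working with the $L^{-1}$-energy; as written, your lemma contradicts the definition of $E$ given two paragraphs earlier in the paper (the discrepancy is an error in the paper's paraphrase of \cite{GuzKli}, but a proof cannot silently switch definitions). Second, your justification that ${\bf z}^T L {\bf z} > 0$ because ``the quadratic form of a non-singular $M$-matrix is positive definite'' is false for non-symmetric $M$-matrices: $L = \begin{pmatrix} 1 & -3 \\ 0 & 1 \end{pmatrix}$ is a non-singular $M$-matrix, yet ${\bf z} = (1,1)$ gives ${\bf z}^T L {\bf z} = -1$. Since the theorem is stated for arbitrary $M$-matrices, your energy lemma as written only covers the symmetric case --- which is all this paper ever needs, since every dual Laplacian here has the form $\Lap^* = \iota^T\iota$, but the restriction must be declared, and the general case requires the more careful treatment of \cite{GuzKli}.
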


As a corollary we see that in each equivalence class defined by $L$ there exists a unique $z$-superstable configuration.  We mention that a natural notion of \emph{critical} configurations for $M$-matrices is also discussed in \cite{GuzKli}, but we will not need that here.

\section{Integral flow chip-firing and M-bases} \label{sec:dual}

Here we discuss in more detail our approach to integral flow and cycle chip-firing on a graph $G$.  As mentioned in Section~\ref{sec:intro} the basic idea is to view traditional chip-firing on $G$ as determined by the incidence matrix of $G$ and to apply matroidal/Gale duality.

We first recall some notions from lattice theory from \cite{BacDelNag}.  For this let ${\R}^m$ denote Euclidean space with the usual inner product $\langle \cdot, \cdot \rangle$. Suppose $F$ and $C$ are orthogonal subspaces of ${\R}^m$ and further suppose that both are rational (so that each has a basis consisting of vectors from ${\Q}^m$). Let $\Flow$ and $\Cut$ denote the lattices given by $\Flow = \Z^m \cap F$ and $\Cut = \Z^m \cap C$.

For a lattice $\Lambda \subset {\R}^m$ we define its \defi{dual lattice} as
\[
    \Lambda^\sharp = \{x \in {\R}^m: \langle x, \lambda \rangle \in \Z \;\text{for all $\lambda \in \Lambda$}\}. \]
From \cite{BacDelNag} we have the following.

\begin{lemma}\cite[Lemma 1]{BacDelNag}\label{lem:detiso}
With notation as above,
\[
    \Cut^\sharp/\Cut \cong \Flow^\sharp / \Flow.
\]
In fact both are isomorphic to the group $\Z^m/(\Cut \oplus \Flow)$.
\end{lemma}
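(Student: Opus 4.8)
The plan is to prove the two isomorphisms by exhibiting a single group $\Z^m/(\Cut \oplus \Flow)$ that maps isomorphically onto each of $\Cut^\sharp/\Cut$ and $\Flow^\sharp/\Flow$. Since $F$ and $C$ are orthogonal rational subspaces, I would first establish that $\Flow$ and $\Cut$ are the lattices living in these subspaces and that the key relationship is $\Cut^\sharp = \pi_C(\Z^m)$, where $\pi_C$ denotes orthogonal projection onto $C$ (and symmetrically $\Flow^\sharp = \pi_F(\Z^m)$). This is the linear-algebraic heart of the argument: an element $x \in C$ satisfies $\langle x, \lambda\rangle \in \Z$ for all $\lambda \in \Cut$ precisely when $x$ is the $C$-component of some integer vector. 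One inclusion is routine since $\pi_C(v) = v - \pi_F(v)$ pairs integrally against $\Cut$; the reverse inclusion requires using that $\Z^m$ is a full-rank lattice in ${\R}^m = F \oplus C$ together with rationality of the two subspaces.

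Next I would define a map $\phi: \Z^m \to \Cut^\sharp/\Cut$ by $\phi(v) = \pi_C(v) + \Cut$, which is a well-defined group homomorphism since $\pi_C$ is linear and, by the projection identity above, lands in $\Cut^\sharp$. The two things to verify are surjectivity and the computation of the kernel. Surjectivity is immediate from $\Cut^\sharp = \pi_C(\Z^m)$. For the kernel, $\phi(v) = 0$ means $\pi_C(v) \in \Cut$, i.e. $\pi_C(v) \in \Z^m$; since $v = \pi_C(v) + \pi_F(v)$ and $v \in \Z^m$, this forces $\pi_F(v) = v - \pi_C(v) \in \Z^m \cap F = \Flow$, and likewise $\pi_C(v) \in \Cut$. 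Conversely if $v \in \Cut \oplus \Flow$ then both projections are integral and $\phi(v) = 0$. Hence $\ker \phi = \Cut \oplus \Flow$, and the first isomorphism theorem gives
\[
    \Z^m/(\Cut \oplus \Flow) \cong \Cut^\sharp/\Cut.
\]
By the completely symmetric argument with the roles of $F$ and $C$ (equivalently $\Flow$ and $\Cut$) interchanged, the same quotient is isomorphic to $\Flow^\sharp/\Flow$, which yields the stated result.

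The main obstacle I expect is the identification $\Cut^\sharp = \pi_C(\Z^m)$, and in particular proving that every $x \in C$ pairing integrally with all of $\Cut$ actually arises as a projection of an integer vector. The subtlety is that $\Cut$ is a lattice in the subspace $C$ (not all of ${\R}^m$), so the dual pairing condition only constrains $x$ within $C$; one must leverage the orthogonal complement structure and the fact that $\Z^m$ projects onto a lattice whose dual (inside $C$) is exactly $\pi_C(\Z^m)$. Establishing this cleanly is where the rationality hypothesis is essential, and it may be cleanest to argue via bases: choose integral bases adapted to the decomposition and track the dual basis explicitly, or alternatively cite the standard fact that for a full lattice $\Z^m$ and an orthogonal splitting into rational subspaces, the projected lattices are dual to one another in each subspace.
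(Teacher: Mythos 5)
The paper gives no proof of this lemma at all: it is quoted with a citation to \cite[Lemma 1]{BacDelNag}, and only the downstream Proposition~\ref{prop:dualgroup} is proved in the text. So the comparison is really with the proof in that reference, and your argument is essentially that proof: orthogonally project $\Z^m$ onto $C$, identify $\pi_C(\Z^m)$ with $\Cut^\sharp$, show the composite $\Z^m \to \Cut^\sharp \to \Cut^\sharp/\Cut$ is surjective with kernel $\Cut \oplus \Flow$, and conclude by the first isomorphism theorem and by symmetry between $C$ and $F$. Your kernel computation is correct and complete.

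Two remarks to tighten what you left as a sketch. First, the crux you flag, $\Cut^\sharp = \pi_C(\Z^m)$, closes in a few lines via double duality rather than a basis chase: rationality of $C$ means the projection matrix $\pi_C$ has rational entries, so $\pi_C(\Z^m) \subseteq \frac{1}{N}\Z^m$ for some integer $N$ and is therefore discrete, and it spans $C$ since $\Z^m$ spans $\R^m$; hence $\pi_C(\Z^m)$ is a lattice in $C$. For $x \in C$ one has $\langle x, \pi_C(v)\rangle = \langle x, v\rangle$, so the dual of $\pi_C(\Z^m)$ inside $C$ is $C \cap (\Z^m)^\sharp = C \cap \Z^m = \Cut$; applying the involution $(L^\sharp)^\sharp = L$ for lattices in $C$ gives the claim. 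Second, you are implicitly (and correctly) reading the dual lattice $\Lambda^\sharp$ as living inside $\spn_\R(\Lambda)$: under the paper's displayed definition, where $x$ ranges over all of $\R^m$, the set $\Cut^\sharp$ would contain the whole subspace $F$ and the quotient $\Cut^\sharp/\Cut$ would not even be finitely generated. That restriction, together with the assumption $\R^m = C \oplus F$ (the subspaces must be orthogonal \emph{complements}, as they are in the graph setting $C = \im \partial^T$, $F = \ker \partial$), is exactly what \cite{BacDelNag} assume and is needed for the statement to be true; it would be worth making both hypotheses explicit in your write-up.
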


The group $\Lambda^\sharp/\Lambda$ is called the \defi{determinant group} of the lattice $\Lambda$. As was pointed out in \cite{Jac}, we have another way to compute determinant groups in terms of a basis for the lattice.  In what follows recall that the \defi{Gram matrix} of a set of vectors $\{v_1, v_2, \dots, v_r\} \subset {\mathbb R}^m$ is the $r \times r$ matrix $G$ with entries given by $G_{ij} = \langle v_i, v_j\rangle$. 

\begin{lemma}\cite[Theorem 12]{Jac}\label{lem:coker}
Suppose $\Lambda$ is a rank $r$ sublattice of $\Z^m \subset {\R}^m$, and suppose $\{z_1, \dots, z_r\}$ is a $\Z$-basis for $\Lambda$ with Gram matrix $G$. Then the determinant group of $\Lambda$ is given by
\[
    \Lambda^\sharp/\Lambda \cong \Z^r/\im(G).
\]
\end{lemma}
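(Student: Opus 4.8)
The plan is to make everything concrete in terms of the basis matrix and then reduce the statement to an elementary fact about the Gram matrix. Let $Z$ be the $m \times r$ integer matrix whose columns are $z_1, \dots, z_r$, so that $\Lambda = Z\Z^r$ and $V \define \spn_{\R}\Lambda$ is exactly the column space of $Z$. Since the $z_i$ are linearly independent, the map $y \mapsto Zy$ is an isomorphism of real vector spaces $\R^r \xrightarrow{\sim} V$ carrying $\Z^r$ onto $\Lambda$. I will also write $M \define Z^T Z = (\langle z_i, z_j\rangle)_{i,j}$ for the Gram matrix; as the Gram matrix of linearly independent vectors it is symmetric positive definite, hence invertible over $\Q$. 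Throughout I read $\Lambda^\sharp$ inside $V$ rather than all of $\R^m$, which is the only reading under which $\Lambda^\sharp/\Lambda$ is the finite determinant group.

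First I would express the dual lattice in the coordinates supplied by $Z$. For $x = Zy \in V$ with $y \in \R^r$ one computes $\langle x, z_j\rangle = z_j^T Z y = (My)_j$, so the condition $\langle x, z_j\rangle \in \Z$ for all $j$ becomes simply $My \in \Z^r$, i.e. $y \in M^{-1}\Z^r$. Transporting along the isomorphism $Zy \leftrightarrow y$, this identifies $\Lambda$ with $\Z^r$ and $\Lambda^\sharp$ with $M^{-1}\Z^r$, and hence gives $\Lambda^\sharp/\Lambda \cong M^{-1}\Z^r/\Z^r$.

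It then remains to identify $M^{-1}\Z^r/\Z^r$ with $\Z^r/M\Z^r = \Z^r/\im M$. For this I would use the linear map given by multiplication by $M$: since $M$ sends $M^{-1}\Z^r$ into $\Z^r$ and sends the subgroup $\Z^r$ into $M\Z^r$, it descends to a homomorphism $M^{-1}\Z^r/\Z^r \to \Z^r/M\Z^r$. Injectivity holds because $M$ is a bijection of $\R^r$ and $My \in M\Z^r$ forces $y \in \Z^r$; surjectivity is immediate, since any $w \in \Z^r$ equals $M(M^{-1}w)$ with $M^{-1}w \in M^{-1}\Z^r$. Chaining the two isomorphisms yields $\Lambda^\sharp/\Lambda \cong \Z^r/\im(\langle z_i, z_j\rangle)$, as claimed.

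There is no serious obstacle here; the content lies entirely in keeping the bookkeeping honest. The one point that deserves care is the convention on $\Lambda^\sharp$: taken literally in $\R^m$, the set $\{x : \langle x, \lambda\rangle \in \Z\}$ also contains the orthogonal complement of $V$ and is not a lattice, so one must restrict to $V = \spn_{\R}\Lambda$ for the determinant group to be finite, exactly as is implicit in Lemma~\ref{lem:detiso}. After that, the only facts used are that $Z$ has full column rank (giving both the coordinate isomorphism and the invertibility of $M$) and the elementary isomorphism $M^{-1}\Z^r/\Z^r \cong \Z^r/M\Z^r$.
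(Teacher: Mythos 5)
Your proof is correct, but there is nothing in the paper to compare it against: the paper does not prove this lemma at all, importing it as \cite[Theorem 12]{Jac} from an unpublished thesis. So your argument supplies a proof where the paper relies on a reference. The route you take is the natural one, and every step checks out: the coordinate isomorphism $y \mapsto Zy$ identifying $(\Z^r, \R^r)$ with $(\Lambda, \spn_{\R}\Lambda)$; the computation $\langle Zy, z_j\rangle = (My)_j$ showing the dual lattice corresponds to $M^{-1}\Z^r$ where $M = Z^TZ$ is the Gram matrix; and the elementary isomorphism $M^{-1}\Z^r/\Z^r \cong \Z^r/M\Z^r$ induced by multiplication by $M$, whose injectivity and surjectivity you verify correctly using the invertibility of $M$. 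You also catch a genuine imprecision in the paper: its displayed definition of $\Lambda^\sharp$, read literally in $\R^m$, contains the orthogonal complement of $\spn_{\R}\Lambda$ whenever $r < m$, so the quotient $\Lambda^\sharp/\Lambda$ would not be finite; restricting to $\spn_{\R}\Lambda$, as in \cite{BacDelNag}, is the convention under which both this lemma and Lemma~\ref{lem:detiso} make sense, and your proof makes that repair explicit. The one thing your write-up could state a touch more carefully is the reduction from ``$\langle x,\lambda\rangle \in \Z$ for all $\lambda \in \Lambda$'' to ``$\langle x, z_j\rangle \in \Z$ for all $j$,'' which follows since the $z_j$ generate $\Lambda$ over $\Z$ and the inner product is bilinear; this is a one-line remark, not a gap.
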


\begin{proof}
For completeness we provide the proof from \cite{Jac}. Given the $\Z$-basis $\{z_1, \dots, z_r\}$ for $\Lambda$ let $\{w_1, \dots, w_r\} \subset {\mathbb R}^m$ denote the dual $\Z$-basis for $\Lambda^\sharp$ defined by $\langle z_i, w_j \rangle = \delta_{ij}$.  Expressing our basis in terms of the dual basis we have  
\[z_i = \sum_{j=1}^r c_{ij} w_j,\]
\noindent
where $c_{ij} = \langle z_i, z_j \rangle$. When then have
\[\Lambda^\sharp/\Lambda \cong (\Z w_1 \oplus \cdots \oplus \Z w_r)/(\Z z_1 \oplus \cdots \oplus \Z z_r)
\cong \Z^r/\im(G).\]
\end{proof}

To apply these results in our context we let $G = (V,E)$ be a finite simple graph on vertex set $V = \{0,1, \dots,n\}$ and edge set $E$. Let $\partial = \partial_G: {\R}^E \rightarrow {\R}^V $ denote the signed incidence matrix of $G$ (after specifying some orientation).  We let 
\[
    \Flow(G) = \Z^E \cap \ker(\partial)
\]
denote the \defi{lattice of integral flows}
and 
\[
    \Cut(G) = \Z^E \cap \im(\partial^T)
\]
denote the \defi{lattice of integral cuts}.  We use $\Flow$ and $\Cut$ to denote these lattices if the context is clear. Note that (assuming $G$ is connected) a $\Z$-basis for $\Cut$ is obtained by removing a single row of $\partial$ corresponding to any (say the sink) vertex.

Dually, suppose we have a $\Z$-basis $\{{\bf f}_1, \dots, {\bf f}_g\}$ for $\Flow$, where $g = |E| - |V| + 1$ is the genus of the graph.  Let $\iota$ be the $|E| \times g$ matrix with columns ${\bf f}_i$ and define $\Lap^* \define \iota^T \iota$ to be the \defi{dual Laplacian} (with respect to this choice of basis).   We recover the critical group as the cokernel of $\Lap^*$, as follows.

\begin{prop}\label{prop:dualgroup}
Suppose $G$ is any graph. Let $\iota$ be the matrix associated to any $\Z$-basis for the lattice of integral flows $\Flow(G)$, and let $\Lap^* = \iota^T\iota$ denote the associated dual Laplacian.  We then have an isomorphism of groups
\[
    \kappa(G) \cong \Z^g/\im \Lap^*.
\]
\end{prop}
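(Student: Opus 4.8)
The plan is to combine the two lattice-theoretic lemmas already stated with the classical identification of the critical group. First, recall that $\Flow(G) = \Z^E \cap \ker(\partial)$ is a rank $g$ sublattice of $\Z^E \subset \R^E$, and $\{{\bf f}_1, \dots, {\bf f}_g\}$ is an integer basis for it by hypothesis. The matrix $\Lap^* = \iota^T\iota$ has $(i,j)$ entry exactly $\langle {\bf f}_i, {\bf f}_j \rangle$, so by Lemma~\ref{lem:coker} applied to $\Lambda = \Flow(G)$ we immediately obtain
\[
    \Flow^\sharp/\Flow \cong \Z^g/\im \Lap^*.
\]
This handles the right-hand side of the desired isomorphism and is essentially a direct invocation of the cited result; the only thing to verify is the bookkeeping that the Gram matrix of the chosen basis coincides with $\iota^T\iota$, which is immediate from the definition of $\iota$ as the matrix whose columns are the ${\bf f}_i$.

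Next I would connect the determinant group of $\Flow$ to the critical group. By Lemma~\ref{lem:detiso}, taking $F = \ker(\partial) \otimes \R$ and $C = \im(\partial^T) \otimes \R$ (which are indeed orthogonal rational subspaces of $\R^E$, since $\ker\partial$ and $\im\partial^T$ are orthogonal complements over $\R$), we get
\[
    \Flow^\sharp/\Flow \cong \Cut^\sharp/\Cut.
\]
So it remains to identify $\Cut^\sharp/\Cut$ with $\kappa(G)$. For this I would apply Lemma~\ref{lem:coker} a second time, now to $\Lambda = \Cut(G)$, using the integer basis given by the rows of $\tilde\partial$ (the reduced incidence matrix), as noted in the excerpt. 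The resulting Gram matrix $(\langle c_i, c_j\rangle)$ is precisely $\tilde\partial\tilde\partial^T = \tilde\Lap$, so $\Cut^\sharp/\Cut \cong \Z^{n}/\im\tilde\Lap$, which the excerpt has already identified as $\kappa(G)$. Chaining the three isomorphisms yields $\kappa(G) \cong \Z^g/\im\Lap^*$.

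The step I expect to require the most care is verifying the orthogonality hypothesis of Lemma~\ref{lem:detiso} and confirming that the rows of $\tilde\partial$ really do form an integer basis for $\Cut$ (rather than merely a rational spanning set), since the cited lemma requires an honest $\Z$-basis for the Gram-matrix computation. The orthogonality $\ker(\partial) \perp \im(\partial^T)$ is the standard fundamental-theorem-of-linear-algebra fact, so the genuine content lies in checking that removing the sink row of $\partial$ produces a basis of the full cut lattice $\Z^E \cap \im(\partial^T)$ and not just a finite-index sublattice; this is where connectedness of $G$ enters. Once these are established the proof is a clean composition of the two lemmas, with no further computation needed.
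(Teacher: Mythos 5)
Your proposal is correct and follows essentially the same route as the paper's own proof: apply Lemma~\ref{lem:coker} to both lattices (rows of $\tilde\partial$ for $\Cut$, the ${\bf f}_i$ for $\Flow$) and link the two determinant groups via Lemma~\ref{lem:detiso}. In fact your write-up attaches each cokernel to the correct lattice ($\Cut^\sharp/\Cut \cong \Z^n/\im\tilde\Lap$ and $\Flow^\sharp/\Flow \cong \Z^g/\im\Lap^*$), whereas the paper's proof text inadvertently swaps the two labels, so your version is the cleaner statement of the same argument.
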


\begin{proof}
If $\partial$ denotes the incidence matrix of $G$ (under some orientation) then we see that $F = \ker \partial$ and $C = \im \partial^T$ satisfy the conditions of Lemma~\ref{lem:detiso}.  Recall that we have a $\Z$-basis for $\Cut(G) = \Z^E \cap C$ given by all rows of $\partial$ corresponding to nonroot vertices.  From Lemma~\ref{lem:coker} we have that 
\[
    \Flow(G)^\sharp/\Flow(G) \cong  \Z^g/ \im \Lap^*.
\]
But also from Lemma~\ref{lem:coker} we have that
\[
    \Cut(G)^\sharp/\Cut(G) \cong \Z^n/\im \tilde \Lap \cong \kappa(G),
\]
and so the result follows from Lemma~\ref{lem:detiso}.
\end{proof}

If $G$ is a planar graph, we can take an embedding of $G$ in the plane to define a dual graph $G^*$. By definition $G^*$ has vertices given by regions in the plane determined by the embedding, and adjacency given by those regions sharing an edge.   After choosing an orientation of the graph $G$, as well as an orientation of the plane, we then have a basis for $\Flow$ given by the bounded regions of the embedding.  Proposition~\ref{prop:dualgroup} in particular tells us that $\kappa(G^*) \cong \kappa(G)$, as was first established in \cite{CorRos}.

\begin{figure}[h] 

\includegraphics[scale = 1]{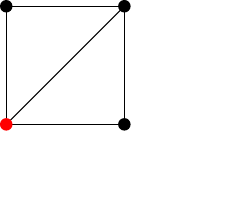}
\includegraphics[scale = 1]{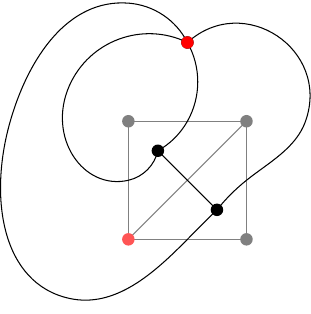}

\caption{The embedded Diamond graph and its dual.} 
 \label{fig:dual}
\end{figure}

\begin{example}
We continue with the graph $G$ from Example~\ref{ex:running}. For the choice of basis for $\Flow(G)$ described above we have the reduced Laplacian $\tilde\Lap$ given by 
\[
    \tilde\Lap = \begin{bmatrix}
         2 & -1 &  0\\
        -1 &  3 & -1\\
         0 & -1 &  2\\
    \end{bmatrix}.
\]
The critical group of $G$ is given by $\Z^3/\im \tilde\Lap \cong \Z/8\Z$, which has representatives (superstable configurations) given by $\{000, 100, 010, 001, 110, 101, 011, 020\}$.
In the dual graph depicted in Figure~\ref{fig:dual}, we take the vertex corresponding to the unbounded face as the (red) sink vertex and order the other vertices arbitrarily. In this case we have
\[
    \Lap^* = \begin{bmatrix}
         3 & -1\\
        -1 &  3
    \end{bmatrix}.
\]
The `dual' critical group is given by $\Z^2/\Lap^* \cong \Z/{8 \Z}$, and the superstable configurations of $G^*$ are $\{00, 10, 01, 20, 02, 11, 21, 12\}$.
\end{example}

\subsection{Finding an M-basis}

As we have seen the (usual, reduced) Laplacian $\tilde \Lap$ plays a prominent role in the traditional theory of chip-firing on a graph $G$.  We would like to use the dual Laplacian $\Lap^*$ to define a similar rule for `flow chip-firing' on $G$, where elements in our basis for ${\mathcal F}(G)$ lend chips to other `neighboring elements'.  However, an arbitrary choice of basis for $\Flow(G) = \ker \tilde{\partial}$ may lead to a dual Laplacian that is not a $Z$-matrix, and in particular may have positive entries off the diagonal.  We illustrate this with the following example.

\begin{example} For the graph in our running Example~\ref{ex:running} if we take the set $\left\{\begin{bmatrix} 1 \\ -1 \\ 0 \\ 1 \\ 0\end{bmatrix}, \begin{bmatrix} 0 \\ -1 \\ 1 \\ 0 \\ -1 \end{bmatrix}\right\}$ as a basis for $\ker \tilde \partial$ we obtain a dual Laplacian $\Lap^* = \begin{bmatrix} 3 & 1 \\ 1 & 3 \end{bmatrix}$.
\end{example}

To simplify notation we make the following definition.
\begin{defn}
Suppose $G$ is a connected graph and let $\Basis = \{{\bf f}_1, \dots, {\bf f}_g\}$ be a $\Z$-basis for the lattice of integral flows $\Flow(G)$. Let $\iota$ be the $|E| \times g$ matrix with columns given by the elements of $\Basis$. We say that $\Basis$ is an \defi{integral flow $M$-basis} (or simply \defi{flow $M$-basis}) if the matrix $\Lap^* = \iota^T \iota$ is an $M$-matrix.
\end{defn} 

As expected the situation is fairly straightforward for the case of planar graphs.

\begin{prop}\label{prop:planarMbasis}
Any planar graph admits a flow $M$-basis.
\end{prop}

\begin{proof}
This follows from basic properties of graph duality; we provide a proof for completeness. Suppose $G$ is a planar graph embedded in the plane and let $R_1, R_2, \dots, R_g$ denote the set of bounded regions. Orient $R_1$ arbitrarily, and for any $R_i$ that shares an edge with $R_1$ orient $R_i$ so that the edges are oriented in opposite directions.  Continue this process until all regions have been oriented.  If some $R_j$ does not share an edge with the regions oriented so far, then orient $R_j$ arbitrarily and proceed as above.  A choice of compatible orientations (in the sense that no conflicts arise in the above process) is guaranteed since ${\R}^2$ is itself orientable.  

Now for each region $R_i$ we construct a vector ${\bf f}_i \in \Z^E$ according to the orientation describe above relative to the fixed orientation on $G$ determined by the (signs of) entries of $\partial$.  Namely, the vector ${\bf f}_i$ has a $1$ in the entry corresponding to an edge $e_j$ if the orientation in $R_i$ agrees with that of $\tilde \partial$, $-1$ if the orientation is opposite, and $0$ if the edge $e_j$ does not appear in $R_i$. The collection $\{{\bf f}_1, \dots, {\bf f}_g\}$ forms a basis for $\ker \tilde \partial$ (see for instance the proof of Proposition 8 from \cite{BacDelNag}), and we let $\iota$ denote the matrix with these vectors as columns.

It follows that the matrix $\Lap^* = \iota^T \iota$ equals $\tilde \Lap(G^*)$, the reduced Laplacian of the dual graph of $G$ with respect to the given embedding.  Here the designated root of $G^*$ corresponds to the unbounded face of $G$ in this embedding. Reduced Laplacians of graphs are known to be $M$-matrices \cite{GuzKli}, so the result follows.
\end{proof}

In the case of planar graphs we can import the usual notions of superstable configurations, etc. from the theory of chip-firing on $G^*$, where $G^*$ is the dual graph defined by some embedding of $G$.   The bounded faces of $G$ (thought of as cycles) will then correspond to the nonsink vertices of $G^*$, so firing a vertex of $G^*$ corresponds to firing a cycle of $G$, and we have a reasonable notion of `flow chip-firing' (see Figure~\ref{fig:dual}).

If $G$ is not planar we still have a good notion of $\Lap^*$ (after choosing a $\Z$-basis for $\Flow(G)$) and we would like to interpret this matrix as describing some rule for chip-firing.  We first observe that the only obstruction to being an $M$-matrix is the sign pattern of the entries in $\Lap^*$.

\begin{lemma}\label{lem:MifZ}
Suppose $G$ is a graph and $\{{\bf f}_1, \dots, {\bf f}_g\}$ is a basis for $\Flow(G)$. Then the resulting dual Laplacian $\Lap^*$ is an integral flow $M$-basis if and only if the matrix $\Lap^*$ is a $Z$-matrix.
\end{lemma}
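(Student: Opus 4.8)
The plan is to exploit the fact that $\Lap^*$ is not an arbitrary $Z$-matrix but a Gram matrix, and to combine this structure with the eigenvalue characterization of $M$-matrices recorded in Proposition~\ref{prop:Mmatrix}.

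First I would record the structural facts about $\Lap^* = \iota^T \iota$ that hold for \emph{any} choice of basis $\{{\bf f}_1, \dots, {\bf f}_g\}$. Since the ${\bf f}_i$ are the columns of $\iota$, the matrix $\Lap^*$ is the Gram matrix $(\langle {\bf f}_i, {\bf f}_j \rangle)_{i,j}$ with respect to the standard inner product on $\Z^E$; in particular $\Lap^*$ is symmetric. Moreover, because $\{{\bf f}_1, \dots, {\bf f}_g\}$ is a basis, the ${\bf f}_i$ are linearly independent, so $\iota$ has trivial kernel. Hence for every nonzero ${\bf x} \in \R^g$ we have ${\bf x}^T \Lap^* {\bf x} = \|\iota {\bf x}\|^2 > 0$, so $\Lap^*$ is positive definite. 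These two observations---symmetry and positive definiteness---are the only properties of the flow lattice I expect to use.

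The forward implication is immediate from the definitions: a non-singular $M$-matrix is by definition a $Z$-matrix, so if $\Lap^*$ is an $M$-matrix then it is a $Z$-matrix. For the converse, suppose $\Lap^*$ is a $Z$-matrix. Since $\Lap^*$ is symmetric its eigenvalues are real, and since $\Lap^*$ is positive definite all of these eigenvalues are strictly positive; in particular the real parts of the eigenvalues of $\Lap^*$ are all positive. By the equivalence of conditions (1) and (2) in Proposition~\ref{prop:Mmatrix}, a $Z$-matrix whose eigenvalues have positive real part is avalanche finite, and hence a non-singular $M$-matrix. This yields the reverse implication and completes the argument.

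I do not anticipate a genuine obstacle here: the entire content of the lemma is that a symmetric positive-definite $Z$-matrix is automatically an $M$-matrix, and positive definiteness of $\Lap^*$ comes for free from its being the Gram matrix of a linearly independent set. The only point requiring care is to invoke the eigenvalue criterion (2) rather than, say, condition (4), since symmetry reduces the ``positive real part'' hypothesis to ordinary positive definiteness, which is exactly what the Gram matrix structure supplies.
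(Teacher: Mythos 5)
Your proposal is correct and follows essentially the same route as the paper: both observe that $\Lap^* = \iota^T\iota$ is symmetric positive definite (the paper notes invertibility, you justify it via linear independence of the ${\bf f}_i$), hence has positive real eigenvalues, and then invoke the eigenvalue criterion of Proposition~\ref{prop:Mmatrix} to conclude that a $Z$-matrix with this property is an $M$-matrix. Your write-up is slightly more complete in that it also spells out the trivial forward direction ($M$-matrix $\Rightarrow$ $Z$-matrix by definition), which the paper leaves implicit.
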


\begin{proof}
Let $\iota$ denote the matrix with columns ${\bf f}_1, \dots {\bf f}_g$.  By definition we have that $\Lap^* = \iota^T \iota$ so that $\Lap^*$ is a symmetric positive definite matrix (note that $\Lap^*$ is invertible). From this it follows that $\Lap^*$ will have positive real eigenvalues. Hence as long as $\Lap^*$ is a $Z$-matrix, it will satisfy the conditions of being an $M$-matrix listed in Proposition~\ref{prop:Mmatrix}.
\end{proof}

\begin{example}\label{ex:K5}
Consider $K_5$, the complete graph on vertex set $\{1,2,3,4,5\}$ with an orientation on edges given by $(i,j)$ whenever $i < j$.   With this orientation the matrix $\partial$ is given below (with columns ordered in lexicographical order):
\[
    \partial = \begin{bmatrix}
         1 &  1 &  1 &  1 &  0 &  0 &  0 &  0 &  0 &  0 \\
        -1 &  0 &  0 &  0 &  1 &  1 &  1 &  0 &  1 &  0 \\
         0 & -1 &  0 &  0 & -1 &  0 &  0 &  1 &  1 &  0\\
         0 &  0 & -1 &  0 &  0 & -1 &  0 & -1 &  0 &  1\\
         0 &  0 &  0 & -1 &  0 &  0 & -1 &  0 & -1 & -1 
    \end{bmatrix}
\]

A basis for $\Cut = \ker \partial$ and the resulting dual Laplacian $\Lap^* = \iota^T \iota$  are given by
\[
    \iota = \begin{bmatrix}
         1 &  0 & -1 &  0 &  1 &  0 \\
         0 & -1 &  0 &  0 & -1 &  1 \\
         0 &  0 &  0 &  1 &  0 & -1 \\
        -1 &  1 &  1 & -1 &  0 &  0 \\
         0 &  1 & -1 &  1 &  0 &  0 \\
         1 &  0 &  0 & -1 &  0 &  0 \\
         0 & -1 &  0 &  0 &  1 &  0 \\
         0 &  0 &  0 &  0 & -1 &  1 \\
         0 &  0 & -1 &  1 &  0 &  0 \\
         1 &  0 &  0 &  0 & -1 &  0
    \end{bmatrix},
\hspace{.2 in}
    \Lap^* = \begin{bmatrix}
         4 & -1 & -2 &  0 &  0 &  0 \\
        -1 &  4 &  0 &  0 &  0 & -1 \\
        -2 &  0 &  4 & -3 & -1 &  0 \\
         0 &  0 & -3 &  5 &  0 & -1 \\
         0 &  0 & -1 &  0 &  5 & -2 \\
         0 & -1 &  0 & -1 & -2 &  3
    \end{bmatrix}
\]

\begin{figure}[h] 

\includegraphics[scale = 1]{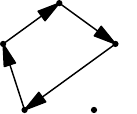} \hspace{.1 in}
\includegraphics[scale = 1]{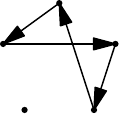} \hspace{.1 in}
\includegraphics[scale = 1]{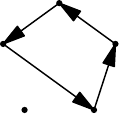} \hspace{.1 in}
\includegraphics[scale = 1]{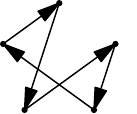} \hspace{.1 in}
\includegraphics[scale = 1]{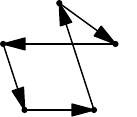} \hspace{.1 in}
\includegraphics[scale = 1]{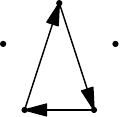}

 \caption{The cycles represented by the basis for $\ker \partial$ in Example \ref{ex:K5}.}
    
 \label{fig:cycle_basis}
\end{figure}

According to Lemma \ref{lem:MifZ} we see that $\Lap^*$ is an $M$-matrix.  Note that the sum of the entries in the third row is negative.  Hence (from results in \cite{GuzKli}) there will exist configurations that are stable under set firings but which are not $z$-superstable.

For example, one can check that the configuration ${\bf c} = (1,1,0,0,0,1)$ is $\chi$-superstable. However, firing the multiset $(5, 3, 8, 6, 4, 6)$ on ${\bf c}$ (so that site one fires $5$ times, site two fires $3$ times, etc.) results in the zero configuration.
\end{example}

To construct flow $M$-bases for general graphs we will need a slightly modified version of the Gram-Schmidt algorithm which we record below.

\begin{lemma}\label{lem:GS}
Suppose $({\bf v}_1, \dots {\bf v}_n\}$ is a linearly independent collection of vectors in ${\R}^n$.  Then there exists an orthogonal set of vectors $({\bf q}_1, \dots, {\bf q}_n\}$ with the property that for all $i =2, \dots, n$ the vector ${\bf q}_i$ is in the integer span of $\{{\bf v}_1, \dots {\bf v}_{i-1}\}$.
\end{lemma}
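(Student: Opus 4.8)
The plan is to mimic the classical Gram--Schmidt procedure but, at each step, to clear denominators so as to stay inside the integer span of the vectors seen so far. The ordinary Gram--Schmidt process produces orthogonal vectors $\tilde{\bf q}_i = {\bf v}_i - \sum_{j<i} \frac{\langle {\bf v}_i, \tilde{\bf q}_j\rangle}{\langle \tilde{\bf q}_j, \tilde{\bf q}_j\rangle} \tilde{\bf q}_j$, but these projection coefficients are rational rather than integral. The key observation is that orthogonality is preserved under scaling each ${\bf q}_i$ by any nonzero scalar, so we have freedom to multiply through by whatever integer is needed to kill the denominators.

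The argument I would give proceeds by induction on $i$. First I would set ${\bf q}_1 = {\bf v}_1$, which is trivially in the integer span of $\{{\bf v}_1\}$. For the inductive step, assume ${\bf q}_1, \dots, {\bf q}_{i-1}$ have been constructed, are pairwise orthogonal, and each ${\bf q}_j$ lies in the integer span of $\{{\bf v}_1, \dots, {\bf v}_j\}$. I would then define
\[
    {\bf q}_i = \left(\prod_{j=1}^{i-1} \langle {\bf q}_j, {\bf q}_j \rangle\right) {\bf v}_i - \sum_{j=1}^{i-1} \left(\langle {\bf v}_i, {\bf q}_j\rangle \prod_{\substack{k=1 \\ k \neq j}}^{i-1} \langle {\bf q}_k, {\bf q}_k\rangle\right) {\bf q}_j.
\]
This is exactly the usual projection formula with each term scaled by the common factor $\prod_{j<i} \langle {\bf q}_j, {\bf q}_j\rangle$. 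Orthogonality to each ${\bf q}_\ell$ with $\ell < i$ follows from a direct computation: pairing ${\bf q}_i$ against ${\bf q}_\ell$, all the $j \neq \ell$ terms in the sum vanish by the inductive orthogonality, and the surviving $j = \ell$ term exactly cancels the contribution from the first term. Integrality is immediate, since all the inner products $\langle {\bf v}_i, {\bf q}_j\rangle$ and $\langle {\bf q}_k, {\bf q}_k\rangle$ are integers (the ${\bf v}$'s and, inductively, the ${\bf q}$'s have integer entries), and ${\bf q}_i$ is an integer combination of ${\bf v}_i$ and ${\bf q}_1, \dots, {\bf q}_{i-1}$. By the inductive hypothesis each ${\bf q}_j$ with $j < i$ lies in the integer span of $\{{\bf v}_1, \dots, {\bf v}_j\} \subseteq \{{\bf v}_1, \dots, {\bf v}_{i-1}\}$, so ${\bf q}_i$ lies in the integer span of $\{{\bf v}_1, \dots, {\bf v}_i\}$, as required.

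The only point requiring care is that the scaling factors $\langle {\bf q}_j, {\bf q}_j\rangle$ must be nonzero, which holds because each ${\bf q}_j$ is a nonzero vector: it has a nonzero projection onto ${\bf v}_j$ (as ${\bf v}_1, \dots, {\bf v}_n$ are linearly independent), so ${\bf q}_j \neq {\bf 0}$ and hence $\langle {\bf q}_j, {\bf q}_j \rangle > 0$. I do not expect any genuine obstacle here; the statement is essentially the assertion that Gram--Schmidt can be run over $\Z$ at the cost of scaling, and the whole content is the explicit denominator-clearing in the displayed formula together with the routine orthogonality check. The mild bookkeeping cost is that the resulting vectors ${\bf q}_i$ may have very large entries, since the scaling factors compound multiplicatively across steps, but this does not affect the truth of the statement and is in fact exactly the source of the ``complicated linear combinations with many large entries'' remarked upon in the introduction.
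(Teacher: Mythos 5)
Your proposal is correct and is essentially identical to the paper's own proof: the displayed formula is exactly the paper's scaled Gram--Schmidt vector ${\bf q}_i = \bigl(\prod_{j<i}\|{\bf q}_j\|^2\bigr)\bigl({\bf v}_i - \sum_{j<i}\frac{{\bf v}_i\cdot{\bf q}_j}{\|{\bf q}_j\|^2}{\bf q}_j\bigr)$ written in expanded form, with the same inductive integrality and orthogonality arguments. Note only that, like the paper, you implicitly use that the inner products are integers (i.e.\ that the ${\bf v}_i$ are integer vectors, as they are in the lattice application), even though the lemma as stated allows arbitrary vectors in $\R^n$.
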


\begin{proof}
We apply the Gram-Schimdt algorithm but scale at each step so that the constructed vectors are in the desired integer span. In particular we set ${\bf q}_1 \define {\bf v}_1$ and for $i>1$ define the vectors ${\bf q}_i$ as
\[
    {\bf q}_i \define \left(\prod^{i-1}_{j=1}\|{\bf q}_j\|^2\right)\left({\bf v}_i-\sum^{i-1}_{j=1}\dfrac{{\bf v}_i\cdot {\bf q}_j}{\|{\bf q}_j\|^2}{\bf q}_j\right).
\]

We see that ${\bf q}_i$ is an integer linear combination of the set $\{{\bf v}_i, {\bf q}_1, \dots, {\bf q}_{i-1}\}$. By construction for all $j = 1, \dots, i-1$ each ${\bf q}_j$ is an integer linear combination of $\{{\bf v}_1, {\bf v}_2, \dots, {\bf v}_j\}$.  The vector ${\bf q}_i$ is nonzero (since the original set was linearly independent) and a calculation shows that it is orthogonal to ${\bf q}_j$ for all $j < i$.
\end{proof}

With this we prove our main algebraic result.

\begin{thm}\label{thm:latticebasis}
Suppose $\{{\bf v}_1, \dots, {\bf v}_g\}$ is a $\Z$-basis for a lattice $\Lambda \subset {\R}^d$.  Then there exists a $\Z$-basis $\{{\bf f}_1, \dots, {\bf f}_g\}$ for $\Lambda$ with the property that ${\bf f}_i \cdot {\bf f}_j \leq 0$ for all $i \neq j$.
\end{thm}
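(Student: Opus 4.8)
The plan is to build the desired basis $\{{\bf f}_1, \dots, {\bf f}_g\}$ one vector at a time, maintaining the invariant that after placing ${\bf f}_1, \dots, {\bf f}_i$ these already form an integral basis for the sublattice $\spn_{\Z}\{{\bf v}_1, \dots, {\bf v}_i\}$ and satisfy ${\bf f}_k \cdot {\bf f}_\ell \leq 0$ for all distinct $k, \ell \leq i$. I would begin with ${\bf f}_1 \define {\bf v}_1$, for which there is no condition to check, and proceed by induction on $i$. Because earlier vectors are never modified, the final collection at $i = g$ will automatically have all pairwise inner products non-positive.

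Suppose ${\bf f}_1, \dots, {\bf f}_{i-1}$ have been constructed. I would search for ${\bf f}_i$ of the form ${\bf f}_i = {\bf v}_i - \sum_{j<i} a_j {\bf f}_j$ with $a_j \in \Z$. Since ${\bf v}_i$ enters with coefficient $1$ and the ${\bf f}_j$ already span $\spn_{\Z}\{{\bf v}_1, \dots, {\bf v}_{i-1}\}$, any such choice keeps $\{{\bf f}_1, \dots, {\bf f}_i\}$ an integral basis of $\spn_{\Z}\{{\bf v}_1, \dots, {\bf v}_i\}$. Expanding the inner products, the requirement that ${\bf f}_i \cdot {\bf f}_k \leq 0$ for every $k < i$ becomes the linear system $G{\bf a} \geq {\bf c}$ (componentwise), where $G = ({\bf f}_j \cdot {\bf f}_k)_{j,k<i}$ is the Gram matrix of the vectors already chosen and ${\bf c} = ({\bf v}_i \cdot {\bf f}_k)_{k<i}$. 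As ${\bf f}_1, \dots, {\bf f}_{i-1}$ are linearly independent, $G$ is symmetric positive definite, hence invertible, and the feasible set $\{{\bf a} \in \R^{i-1} : G{\bf a} \geq {\bf c}\} = G^{-1}{\bf c} + G^{-1}\bigl(\R^{i-1}_{\geq 0}\bigr)$ is a full-dimensional translated simplicial cone.

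The crux of the argument, and the step I expect to be the main obstacle, is to show that this cone always contains a lattice point ${\bf a} \in \Z^{i-1}$. Note that such a point generally cannot be found near the apex, which is exactly why the resulting coefficients $a_j$ may be large and the construction can produce complicated integer combinations. One clean way to finish is a covering argument: a full-dimensional cone contains Euclidean balls of arbitrarily large radius as one travels away from the apex along an interior direction, and any ball of radius exceeding the covering radius $\tfrac12\sqrt{i-1}$ of $\Z^{i-1}$ must contain an integer point; this route is insensitive to whether $G$ and ${\bf c}$ are rational, so it covers the general real lattice $\Lambda \subset \R^d$. Alternatively, one can exhibit such a point explicitly using the scaled Gram--Schmidt vectors of Lemma~\ref{lem:GS}, which diagonalize the ambient inner product and let one satisfy the orthant condition in the orthogonalized frame. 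Either route yields a valid ${\bf f}_i$, and iterating up to $i = g$ produces the claimed basis with ${\bf f}_i \cdot {\bf f}_j \leq 0$ for all $i \neq j$.
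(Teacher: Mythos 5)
Your proof is correct, and it shares the paper's overall inductive skeleton: set ${\bf f}_1 \define {\bf v}_1$, never revisit earlier vectors, and obtain ${\bf f}_i$ from ${\bf v}_i$ by subtracting an integer combination of lattice vectors already in hand, which indeed keeps $\{{\bf f}_1,\dots,{\bf f}_i\}$ a $\Z$-basis of $\spn_\Z\{{\bf v}_1,\dots,{\bf v}_i\}$. But your key step is genuinely different. The paper corrects ${\bf v}_i$ along the scaled Gram--Schmidt vectors ${\bf q}_j$ of Lemma~\ref{lem:GS} rather than along the ${\bf f}_j$: since ${\bf q}_j$ is orthogonal to ${\bf f}_1,\dots,{\bf f}_{j-1}$, adding a multiple of ${\bf q}_j$ adjusts the inner product with ${\bf f}_j$ without disturbing constraints already satisfied, so the constraint system is triangular and each coefficient can be chosen greedily as an explicit floor $\bigl\lfloor -{\bf f}_j\cdot {\bf a}_{n,j}/({\bf f}_j\cdot{\bf q}_j)\bigr\rfloor$, the only analytic input being ${\bf f}_j\cdot{\bf q}_j = {\bf v}_j\cdot{\bf q}_j > 0$. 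Your correction along the ${\bf f}_j$ themselves instead leaves the coupled system $G{\bf a}\geq{\bf c}$, which you solve non-constructively: the feasible region is a translated full-dimensional simplicial cone, such a cone contains balls of arbitrarily large radius (if $B({\bf p},r)$ lies in the cone based at the apex, so does $B(t{\bf p},tr)$ for $t>1$ by scaling), and any ball of radius exceeding $\tfrac12\sqrt{i-1}$ contains a point of $\Z^{i-1}$; that argument is sound. The trade-off: the paper's route is an explicit algorithm (which the authors actually run to produce examples), while yours is a pure existence proof; conversely, your covering-radius argument uses no rationality whatsoever, whereas the integrality claim in Lemma~\ref{lem:GS} --- and hence the paper's proof as written --- implicitly relies on the ${\bf v}_i$ having integer (or at least rational) coordinates so that the scaled Gram--Schmidt coefficients are integers. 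So your version applies verbatim to arbitrary real lattices $\Lambda\subset\R^d$, which is a mild but genuine strengthening.
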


\begin{proof}
Let $\{{\bf q}_1, \dots {\bf q}_g\}$ be the set of orthogonal vectors obtained from $\{{\bf v}_1 \dots, {\bf v}_g\}$ as in the proof of Lemma~\ref{lem:GS}.  We set ${\bf f}_1 \define {\bf v}_1$ and construct the rest of our vectors inductively.  For ${\bf f}_2$ we want ${\bf f}_2 = {\bf v}_2+ t{\bf q}_1$ for some $t \in \Z$ satisfying ${\bf f}_1 \cdot ({\bf v}_2+t{\bf q}_1) \le 0$.
Since ${\bf q}_1 = {\bf f}_1$ we have that ${\bf q}_1 \cdot {\bf f}_1 > 0$ and hence we need $t \leq \dfrac{-{\bf f}_1\cdot {\bf v}_2}{{\bf f}_1 \cdot {\bf q}_1}$.  Therefore we set
\[
    t = \left\lfloor
        \dfrac{-{\bf f}_1\cdot {\bf v}_2}
              {{\bf f}_1 \cdot {\bf q}_1}
    \right\rfloor,
\]
so that ${\bf f}_2 \define {\bf v}_2+\left\lfloor\dfrac{-{\bf f}_1\cdot {\bf v}_2}{{\bf f}_1 \cdot {\bf q}_1}\right\rfloor {\bf q}_1$. Note that ${\bf f}_2$ is obtained by performing integer column operations to the matrix with columns $\{{\bf v}_1 {\bf v}_2\}$ and hence $\spn_\Z\{{\bf f}_1, {\bf f}_2\} = \spn_\Z\{{\bf v}_1, {\bf v}_2\}$. By construction we have ${\bf f}_1 \cdot {\bf f}_2 \leq 0$.

To illustrate one more step of the process, we define the vector ${\bf f}_3$ as

\[
    {\bf f}_3 \define {\bf v}_3 +
    \left\lfloor
        \dfrac{-{\bf f}_1\cdot {\bf v}_3}
              {{\bf f}_1 \cdot {\bf q}_1}
    \right\rfloor {\bf q}_1 +
    \left\lfloor
        \dfrac{-{\bf f}_2\cdot \left({\bf v}_3 +
                \left\lfloor
                    \dfrac{-{\bf f}_1\cdot {\bf v}_3}
                          {{\bf f}_1 \cdot {\bf q}_1}
                \right\rfloor
                {\bf q}_1\right)}
              {{\bf f}_2 \cdot {\bf q}_2}
    \right\rfloor
    {\bf q}_2. \]
Since ${\bf q}_2$ is orthogonal to ${\bf v}_1$ we have that 
\[{\bf f}_1 \cdot {\bf f}_3 = {\bf f}_1 \cdot {\bf v}_3 + \left\lfloor
        \dfrac{-{\bf f}_1\cdot {\bf v}_3}
              {{\bf f}_1 \cdot {\bf q}_1}
    \right\rfloor{\bf f}_1 \cdot {\bf q}_1 
    \leq 0.\]
To see that ${\bf f}_2 \cdot {\bf f}_3 \leq 0$ note that
\[
    {\bf f}_2 \cdot {\bf f}_3 = {\bf f}_2\cdot\left({\bf v}_3 +
    \left\lfloor
        \dfrac{-{\bf f}_1\cdot {\bf v}_3}
              {{\bf f}_1 \cdot {\bf q}_1}
    \right\rfloor {\bf q}_1\right) + {\bf f}_2\cdot
    \left\lfloor
        \dfrac{-{\bf f}_2\cdot \left({\bf v}_3 +
                \left\lfloor
                    \dfrac{-{\bf f}_1\cdot {\bf v}_3}
                          {{\bf f}_1 \cdot {\bf q}_1}
                \right\rfloor
                {\bf q}_1\right)}
              {{\bf f}_2 \cdot {\bf q}_2}
    \right\rfloor
    {\bf q}_2,
\]
so that

\[
    {\bf f}_2 \cdot {\bf f}_3 - 
    {\bf f}_2\cdot\left({\bf v}_3 +
    \left\lfloor
        \dfrac{-{\bf f}_1\cdot {\bf v}_3}
              {{\bf f}_1 \cdot {\bf q}_1}
    \right\rfloor {\bf q}_1\right) =  
    \left\lfloor
        \dfrac{-{\bf f}_2\cdot \left({\bf v}_3 +
                \left\lfloor
                    \dfrac{-{\bf f}_1\cdot {\bf v}_3}
                          {{\bf f}_1 \cdot {\bf q}_1}
                \right\rfloor
                {\bf q}_1\right)}
              {{\bf f}_2 \cdot {\bf q}_2}
    \right\rfloor
    {\bf f}_2\cdot{\bf q}_2.
\]

But ${\bf f}_2 \cdot {\bf q}_2 > 0$ so that

\[ 
    \left\lfloor
        \dfrac{-{\bf f}_2\cdot \left({\bf v}_3 +
                \left\lfloor
                    \dfrac{-{\bf f}_1\cdot {\bf v}_3}
                          {{\bf f}_1 \cdot {\bf q}_1}
                \right\rfloor
                {\bf q}_1\right)}
              {{\bf f}_2 \cdot {\bf q}_2}
    \right\rfloor
    {\bf f}_2\cdot{\bf q}_2 \le
    \left(
        \dfrac{-{\bf f}_2\cdot \left({\bf v}_3 +
                \left\lfloor
                    \dfrac{-{\bf f}_1\cdot {\bf v}_3}
                          {{\bf f}_1 \cdot {\bf q}_1}
                \right\rfloor
                {\bf q}_1\right)}
              {{\bf f}_2 \cdot {\bf q}_2}
    \right)
    {\bf f}_2\cdot{\bf q}_2 
\]
Hence we have
\[
    {\bf f}_2 \cdot {\bf f}_3 - 
    {\bf f}_2\cdot\left({\bf v}_3 +
    \left\lfloor
        \dfrac{-{\bf f}_1\cdot {\bf v}_3}
              {{\bf f}_1 \cdot {\bf q}_1}
    \right\rfloor {\bf q}_1\right) \le
    \left(
        \dfrac{-{\bf f}_2\cdot \left({\bf v}_3 +
                \left\lfloor
                    \dfrac{-{\bf f}_1\cdot {\bf v}_3}
                          {{\bf f}_1 \cdot {\bf q}_1}
                \right\rfloor
                {\bf q}_1\right)}
              {{\bf f}_2 \cdot {\bf q}_2}
    \right)
    {\bf f}_2\cdot{\bf q}_2 =
        {-{\bf f}_2\cdot \left({\bf v}_3 +
                \left\lfloor
                    \dfrac{-{\bf f}_1\cdot {\bf v}_3}
                          {{\bf f}_1 \cdot {\bf q}_1}
                \right\rfloor
                {\bf q}_1\right)}
\]
and we conclude
\[
    {\bf f}_2 \cdot {\bf f}_3 \le 
    -{\bf f}_2\cdot\left({\bf v}_3 +
    \left\lfloor
        \dfrac{-{\bf f}_1\cdot {\bf v}_3}
              {{\bf f}_1 \cdot {\bf q}_1}
    \right\rfloor {\bf q}_1\right) + 
    {\bf f}_2\cdot\left({\bf v}_3 +
    \left\lfloor
        \dfrac{-{\bf f}_1\cdot {\bf v}_3}
              {{\bf f}_1 \cdot {\bf q}_1}
    \right\rfloor
    {\bf q}_1\right) = 0.
\]

For the general case we let $\{{\bf q}_1, \dots {\bf q}_g\}$ be the set of vectors constructed from Lemma~\ref{lem:GS}. Assuming we have constructed vectors $\{{\bf f}_1, \dots, {\bf f}_n\}$ with the desired properties we define 
\[
    {\bf f}_{n+1} \define {\bf v}_{n+1} +
    \sum^{n}_{j=1}
        \left\lfloor
            \dfrac{-{\bf f}_j\cdot {\bf a}_{n,j}}
                  {{\bf f}_j \cdot {\bf q}_j}
            \right\rfloor
    {\bf q}_{j},
\]
where
\[
    {\bf a}_{n,j} = {\bf v}_{n+1} +
    \sum^{j-1}_{k=1}
        \left\lfloor
            \dfrac{-{\bf f}_k\cdot {\bf a}_{n,k}}
                  {{\bf f}_k \cdot {\bf q}_k}
        \right\rfloor {\bf q}_{k},
\]
with ${\bf a}_{n,1} = v_{n+1}$.

First we note that ${\bf f}_i \cdot {\bf q}_i > 0$ for all $i=1,\dots,n$, so that in particular these formulas make sense. To see this let $\alpha_j = \left\lfloor\dfrac{-{\bf f}_j\cdot {\bf a}_{i-1,j}}{{\bf f}_j \cdot {\bf q}_j}\right\rfloor$ and since the set $\{{\bf q}_1, \dots, {\bf q}_{i-1}\}$ is orthogonal observe that 
\[
    {\bf f}_i \cdot {\bf q}_i = ({\bf v}_i + \alpha_1 {\bf q}_1 + \cdots + \alpha_{i-1} {\bf q}_{i-1}) \cdot {\bf q}_i = {\bf v}_i \cdot {\bf q}_i.
\]
\noindent
  But ${\bf v}_i \cdot {\bf q}_i > 0$ since ${\bf v}_i$ and ${\bf q}_i$ are on the same side of the hyperplane $\spn\{{\bf q}_1, \dots, {\bf q}_{i-1}\}$ in the space $\spn\{{\bf q}_1, \dots, {\bf q}_i\}$. Note that ${\bf q}_i \notin \spn\{{\bf q}_1, \dots, {\bf q}_{i-1}\}$ since the set $\{{\bf q}_1, \dots, {\bf q}_i\}$ is orthogonal.  Hence for any $\beta \in {\R}$ whenever $t = \left\lfloor\dfrac{\beta}{{\bf f}_i \cdot {\bf q}_i}\right\rfloor$ we have that $t ({\bf f}_i \cdot {\bf q}_i) \leq \beta$.
 
 We claim that ${\bf f}_{k} \cdot {\bf f}_\ell \leq 0$ for all $1\leq k < \ell \leq n+1$. To see this first note that
\[{\bf f}_k \cdot {\bf f}_\ell = {\bf f}_k \cdot {\bf v}_{\ell} +
    \sum^{k}_{j=1}
        \left\lfloor
            \dfrac{-{\bf f}_j\cdot {\bf a}_{\ell-1,j}}
                  {{\bf f}_j \cdot {\bf q}_j}
            \right\rfloor
    {\bf f}_k \cdot {\bf q}_{j},
\]
since ${\bf q}_m$ is orthogonal to ${\bf f}_k$ for all $m > k$ (recall ${\bf f}_k$ is constructed as a linear combination of $\{{\bf v}_1, \dots, {\bf v}_k\}$).

But we see that 
\[\left\lfloor
            \dfrac{{-\bf f}_k\cdot {\bf a}_{\ell-1,k}}
                  {{\bf f}_k \cdot {\bf q}_k}
            \right\rfloor {\bf f}_k \cdot {\bf q}_k \le -{\bf f}_k \cdot {\bf a}_{\ell-1,k} = -{\bf f}_k \cdot \left({\bf v}_{\ell} +
    \sum^{k-1}_{j=1}
        \left\lfloor
            \dfrac{-{\bf f}_j\cdot {\bf a}_{\ell-1,j}}
                  {{\bf f}_j \cdot {\bf q}_j}
            \right\rfloor
    {{\bf q}_{j}}\right).
            \]
Hence ${\bf f}_k \cdot {\bf f}_\ell \leq 0$, as desired.

Finally it is clear that $\spn_\Z\{{\bf f}_1, \dots, {\bf f}_{n+1}\} = \spn_\Z\{{\bf v}_1, \dots {\bf v}_{n+1}\}$ since ${\bf v}_{n+1}$ was constructed by adding integer multiples of vectors from the set $\{{\bf v}_1, \dots, {\bf v}_n\}$ to the vector ${\bf v}_{n+1}$. The result follows.
\end{proof}

\begin{example}

As an small illustration of the algorithm, suppose
\[\{{\bf v}_1, {\bf v}_2, {\bf v}_3\} = \left\{\begin{bmatrix} 1 \\ 0 \\ 2 \\ 1\end{bmatrix}, \begin{bmatrix} 1 \\ 1 \\ 0 \\ 1\end{bmatrix}, \begin{bmatrix} 2 \\ 0 \\ 1 \\ 1\end{bmatrix}\right\}\]
\noindent
is a given basis for a lattice $\Lambda \in {\R}^4$.  Then Lemma \ref{lem:GS} produces the orthogonal set
\[\{{\bf q}_1, {\bf q}_2, {\bf q}_3\} = \left\{\begin{bmatrix} 1 \\ 0 \\ 2 \\ 1\end{bmatrix}, \begin{bmatrix} 4 \\ 6 \\ -4 \\ 4\end{bmatrix}, \begin{bmatrix} 396 \\ -288 \\ -144 \\ -108 \end{bmatrix}\right\}.\]

We then apply the algorithm in the proof of Theorem \ref{thm:latticebasis} to get 
\[\{{\bf f}_1, {\bf f}_2, {\bf f}_3\} = \left\{\begin{bmatrix} 1 \\ 0 \\ 2 \\ 1\end{bmatrix}, \begin{bmatrix} 0 \\ 1 \\ -2 \\ 0\end{bmatrix}, \begin{bmatrix} -3 \\ -6 \\ 3 \\ -4\end{bmatrix}\right\},\]

\noindent
a basis for $\Lambda$ with the desired properties. 
\end{example}

\begin{cor}\label{cor:Mmatrix}
Any graph $G$ admits an integral flow $M$-basis. That is, there exists a basis ${\mathcal B} = \{{\bf f}_1,  \dots, {\bf f}_g\}$ for its flow space $\Flow(G)$ such that the associated dual Laplacian $\Lap^*$ is an $M$-matrix.
\end{cor}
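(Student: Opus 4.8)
The plan is to combine the two tools already in place: Theorem~\ref{thm:latticebasis}, which produces a lattice basis whose pairwise inner products are non-positive, and Lemma~\ref{lem:MifZ}, which says that for a basis of $\Flow(G)$ the $M$-matrix property is equivalent to the (weaker) $Z$-matrix property. The key observation tying them together is that the Gram matrix $\Lap^* = \iota^T\iota$ has $(i,j)$ entry exactly equal to ${\bf f}_i \cdot {\bf f}_j$, so controlling the off-diagonal inner products of the basis vectors is the same thing as controlling the off-diagonal entries of $\Lap^*$.

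First I would observe that $\Flow(G) = \Z^E \cap \ker\partial$ is a rank $g$ sublattice of the integer lattice $\Z^E \subset \R^E$, and hence admits some integral basis $\{{\bf v}_1, \dots, {\bf v}_g\}$. This places us exactly in the setting of Theorem~\ref{thm:latticebasis} with $\Lambda = \Flow(G)$ and ambient dimension $d = |E|$. Applying the theorem to this basis yields a new integral basis $\{{\bf f}_1, \dots, {\bf f}_g\}$ of $\Flow(G)$ satisfying ${\bf f}_i \cdot {\bf f}_j \leq 0$ for all $i \neq j$. Letting $\iota$ be the $|E| \times g$ matrix with these vectors as columns, the associated dual Laplacian is $\Lap^* = \iota^T\iota$, whose $(i,j)$ entry is precisely ${\bf f}_i \cdot {\bf f}_j$. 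The non-positivity of the off-diagonal inner products then says exactly that $\Lap^*$ is a $Z$-matrix, and Lemma~\ref{lem:MifZ} upgrades this to the statement that $\Lap^*$ is an $M$-matrix. Hence $\{{\bf f}_1, \dots, {\bf f}_g\}$ is a cycle $M$-basis, as desired.

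Since all of the substantive work has been offloaded to Theorem~\ref{thm:latticebasis} and Lemma~\ref{lem:MifZ}, there is essentially no remaining obstacle; the corollary is a direct specialization of the general lattice statement to the lattice of integral flows. The only points requiring care are the bookkeeping that the entries of $\Lap^*$ coincide with the inner products ${\bf f}_i \cdot {\bf f}_j$, so that ``non-positive off-diagonal inner products'' and ``$Z$-matrix'' genuinely mean the same thing, and that the basis output by the theorem remains integral, so that it is still a bona fide lattice basis for $\Flow(G)$.
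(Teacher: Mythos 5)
Your proof is correct and takes exactly the same route as the paper: apply Theorem~\ref{thm:latticebasis} to any integral basis of $\Flow(G)$ to get a basis with non-positive pairwise inner products, note that these inner products are the off-diagonal entries of $\Lap^* = \iota^T\iota$, and invoke Lemma~\ref{lem:MifZ} to upgrade the $Z$-matrix property to the $M$-matrix property. Your added bookkeeping (that $\Flow(G)$ is a rank-$g$ sublattice of $\Z^E$ and so admits an integral basis to feed into the theorem) is a point the paper leaves implicit, but there is no substantive difference.
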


\begin{proof}
From Lemma~\ref{lem:MifZ} we have that $\Lap^*$ is an $M$-matrix if and only if it is a $Z$-matrix.  From Theorem~\ref{thm:latticebasis} we have that a basis $\{{\bf f}_1, \dots, {\bf f}_g\}$ for the lattice $\Flow(G)$ can always be chosen so that ${\bf f}_i \cdot {\bf f}_j \leq 0$ for all $i \neq j$.  The result follows.
\end{proof}

\subsection{\textit{z}-superstable flow configurations}

Suppose $G$ is a graph with flow $M$-basis ${\mathcal B} = \{{\bf f}_1, \dots, {\bf f}_g\}$.  Results from \cite{GuzKli} imply that the set of $z$-superstable configurations represent a unique element from each equivalence class of $\Z^g/\im({\Lap^*})$.  Recall that a configuration is $z$-superstable if no multiset of sites can be fired without resulting in a negative number of chips at some site.  In the context of dual Laplacians we will call these \defi{$z$-superstable flow configurations}, or just \defi{$z$-superstable configurations} if the context is clear.

\begin{prop}\label{prop:zsuper}
Let $G$ be a connected graph, and let ${\mathcal B} = \{{\bf f}_1, {\bf f}_2, \dots, {\bf f}_g\}$ be an integral flow $M$-basis with associated dual Laplacian $\Lap^*$.  Then the number of $z$-superstable flow configurations of $G$ is given by $|\tau(G)|$, the number of spanning trees of $G$.
\end{prop}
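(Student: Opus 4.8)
The plan is to assemble this count directly from results already established in the paper, since the proposition is essentially a bookkeeping corollary. First I would observe that because $\iota$ is a cycle $M$-basis, the associated dual Laplacian $\Lap^*$ is by definition an $M$-matrix. In particular, as noted in the proof of Lemma~\ref{lem:MifZ}, the matrix $\Lap^* = \iota^T\iota$ is symmetric positive definite and hence invertible, so $\im \Lap^*$ is a full-rank sublattice of $\Z^g$ and the quotient $\Z^g/\im\Lap^*$ is a finite abelian group.

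Next I would invoke the Guzm\'an--Klivans theorem quoted above: for any $M$-matrix $L$, a vector ${\bf f} \geq 0$ is $z$-superstable if and only if it is the unique energy minimizer in its equivalence class under $\sim$. As remarked immediately after that theorem, this yields exactly one $z$-superstable representative in each equivalence class defined by $L$. Applying this with $L = \Lap^*$, the set of $z$-superstable cycle configurations is in bijection with the set of equivalence classes, that is, with the elements of $\Z^g/\im\Lap^*$. Therefore the number of $z$-superstable cycle configurations equals $|\Z^g/\im\Lap^*|$.

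To finish, I would identify this cardinality with the number of spanning trees. By Proposition~\ref{prop:dualgroup} we have the isomorphism $\kappa(G) \cong \Z^g/\im\Lap^*$, and the order of the critical group $\kappa(G)$ equals $|\tau(G)|$ by the matrix tree theorem, as recalled in Section~\ref{sec:basics}. Chaining these identities gives
\[
    \#\{z\text{-superstable cycle configurations}\} = |\Z^g/\im\Lap^*| = |\kappa(G)| = |\tau(G)|,
\]
as claimed.

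I do not anticipate a genuine obstacle here: the content of the proposition lies entirely in the earlier results, and the only point requiring a moment of care is confirming that $\Z^g/\im\Lap^*$ is finite, so that ``number of equivalence classes'' is a well-defined finite count; this follows from the positive definiteness of $\Lap^*$. One might also want to emphasize that while the \emph{cardinality} of the set of $z$-superstables is basis-independent, the individual configurations and their degree sequence do depend on the choice of cycle $M$-basis, as discussed in the introduction.
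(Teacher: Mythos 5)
Your proposal is correct and follows essentially the same route as the paper's own proof: the Guzm\'an--Klivans uniqueness of $z$-superstable representatives per equivalence class, followed by Proposition~\ref{prop:dualgroup} and the matrix tree theorem. The only addition is your explicit check that $\Z^g/\im\Lap^*$ is finite via positive definiteness of $\Lap^*$, which the paper leaves implicit but is a reasonable point of care.
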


\begin{proof}
By results of \cite{GuzKli} discussed above, there exists a unique $z$-superstable configuration in each equivalence class determined by $\Lap^*$.  Hence the number of such $z$-superstable configurations is given by
\[
    |\Z^g/\im \Lap^*|.
\]
From Proposition~\ref{prop:dualgroup}, we have 
\[
    \kappa(G) \cong \Z^g/\im \Lap^*,
\]
and since $|\kappa(G)| = |\tau(G)|$ the result follows.
\end{proof}

Recall that for classical chip-firing on a graph $G$, the number of superstable configurations of degree $d$ is given by the number of spanning trees of $G$ with $d$ externally passive edges.  Here the \defi{degree} of a configuration ${\bf c} = (c_1, \dots, c_n)$ is simply the sum $\sum_{i=1}^n c_i$ of its entries . It is not clear if our $z$-superstable flow configurations have any combinatorial interpretation in terms of some statistic on the set of spanning trees.  See Section \ref{sec:final} for further discussion.

\begin{example}\label{ex:zsuper}
For the graph $K_5$ we use the dual Laplacian $\Lap^*$ from Example~\ref{ex:K5} which produces $125$ $z$-superstable flow configurations, corresponding to the $5^{5-2} = 125$ spanning trees of $K_5$.  This collection of multisets is clearly closed under taking subsets, so it suffices to describe the maximal elements. Here they are given by
\begin{align*}
\{000112, 000211, 010022, 010210, 010300, 020021, 020040,
020111, 021020, 021110, 030101, \\
100102, 101020, 101110, 130020, 130110, 200021, 200111, 210020, 210110, 300020, 310000\}.
\end{align*}
The degree sequence is given by $d = (1,6,19,38,39,19,3)$, where the number of $z$-superstable configurations of degree $i-1$ is given by the entry $d_i$.  
\end{example}

\section{Cycle M-bases}\label{sec:circuit}

From the above results we see any graph $G$ admits an integral flow $M$-basis, a basis for the flow space $\Flow(G)$ with the property that the associated dual Laplacian $\Lap^*$ is an $M$-matrix. In the proof of Theorem \ref{thm:latticebasis}, however, we see that the $M$-basis elements are constructed via linear combinations of a given basis and can involve many larger integers.  A natural question to ask is if one can find a flow $M$-basis for $\Flow(G)$ consisting of \emph{cycles}. More precisely we want the entries of each basis vector to be $0$ or $\pm 1$, with the the nonzero entries corresponding to some cycle in the graph $G$ (with some chosen orientation).   Note that to be an element of $\ker \tilde \partial$ we need those cycles to be oriented, so the sign of any entry will be determined by how this orientation compares to the fixed orientation on $G$.

\begin{defn}
Suppose $G$ is a connected graph.  A \defi{cycle $M$-basis} for $G$ is an integral flow $M$-basis such that each basis element corresponds to a cycle as described above.
\end{defn}

In the proof of Proposition \ref{prop:planarMbasis} we see that if $G$ is a planar graph then any embedding of $G$ gives rise to a basis for $\Flow(G)$ that consists of cycles.  Hence we in fact have the following stronger result. 
\begin{prop}\label{prop:planarcircuit}
Any planar graph admits a cycle $M$-basis.
\end{prop}

The next natural question to ask is whether \emph{any} graph $G$ admits a cycle $M$-basis.  After a version of this paper was posted to the arXiv, Chi Ho Yuen and Nathan Zelesko announced that they had shown that the graph $K_{3,5}$ does \emph{not} admit a cycle basis. Their result was established via a computer search, by considering all bases given by cycles and showing that no assignment of orientation leads to a an $M$-basis.  It would be desirable to provide a proof that does not rely on such analysis and perhaps explains `why' no such basis should exist.

On the other hand, via a computer search we have been able to find cycle $M$-bases for the two `minimal' nonplanar graphs.

\begin{prop}\label{prop:circuitK5K33}
The complete graph $K_5$ and the complete bipartite graph $K_{3,3}$ both admit cycle $M$-bases.\end{prop}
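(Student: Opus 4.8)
The plan is to prove the proposition by explicit construction: for each of $K_5$ and $K_{3,3}$ I exhibit a collection of oriented circuits that forms a basis for the flow lattice and whose associated Gram matrix $\Lap^*$ is a $Z$-matrix. The key simplification is Lemma~\ref{lem:MifZ}: since $\Lap^* = \iota^T\iota$ is automatically symmetric and positive definite, it is an $M$-matrix as soon as its off-diagonal entries are nonpositive. Thus the entire content reduces to producing circuits $\mathbf{f}_1, \dots, \mathbf{f}_g$ (with $g = |E|-|V|+1$) satisfying $\mathbf{f}_i \cdot \mathbf{f}_j \leq 0$ for all $i \neq j$ and generating all of $\Flow(G)$ over $\Z$.

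For $K_5$ I would observe that the required basis has in fact already been produced. The six columns of the matrix $\iota$ in Example~\ref{ex:K5} are each supported on four edges with entries in $\{0,\pm 1\}$, and one checks directly that each support is a $4$-cycle of $K_5$ (for instance the second column traces the cycle $1$--$3$--$2$--$5$--$1$). Since the $\Lap^*$ displayed there is a $Z$-matrix, Example~\ref{ex:K5} already constitutes a circuit $M$-basis, and nothing further is needed in that case. For $K_{3,3}$ I would fix the orientation $(i,j)$ with $i<j$ as in Example~\ref{ex:K5}, record the incidence matrix $\partial$, and then display the four explicit oriented circuits $\mathbf{f}_1, \dots, \mathbf{f}_4$ found by the computer search. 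The verification then splits into three routine parts: (i) each $\mathbf{f}_i$ lies in $\Flow(K_{3,3}) = \Z^E \cap \ker\partial$, which is immediate since an oriented circuit is a closed walk and hence $\partial\mathbf{f}_i = 0$; (ii) the Gram matrix $\Lap^* = \iota^T\iota$ has all off-diagonal entries $\leq 0$, a direct computation; and (iii) the four circuits span the full integral flow lattice rather than a proper finite-index sublattice.

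Part (iii) is the only step with any subtlety, and I would handle it via a determinant computation. If $\Lambda' = \spn_\Z\{\mathbf{f}_1, \dots, \mathbf{f}_4\} \subseteq \Flow(K_{3,3})$ has index $k$, then a change-of-basis matrix of determinant $\pm k$ transforms the Gram matrix of a genuine basis into $\Lap^*$, whence $\det \Lap^* = k^2\,|\tau(K_{3,3})|$ using Lemma~\ref{lem:coker} together with Proposition~\ref{prop:dualgroup} (which identifies $|\det \Lap^*|$ for a true basis with $|\kappa(G)| = |\tau(G)|$). Since $|\tau(K_{3,3})| = 3^2\cdot 3^2 = 81$, computing $\det \Lap^* = 81$ forces $k = 1$ and simultaneously certifies both linear independence and full spanning, so the circuits form an honest basis. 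The same determinant check ($\det \Lap^* = 125$) re-confirms the $K_5$ case if one prefers a uniform argument.

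I expect the main obstacle to be entirely non-conceptual. There is no theoretical difficulty once a suitable family of circuits is in hand; the genuinely hard part is \emph{finding} circuits that meet the sign condition $\mathbf{f}_i \cdot \mathbf{f}_j \leq 0$ and the unimodular spanning condition at the same time, which is precisely why a computer search was used. The write-up therefore amounts to exhibiting the circuit vectors and the resulting matrix $\Lap^*$, and then verifying the two inequalities and the single determinant identity described above.
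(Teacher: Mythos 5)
Your proposal is correct and follows essentially the same route as the paper: both exhibit the explicit circuit basis from Example~\ref{ex:K5} for $K_5$ and an explicit four-circuit basis (found by computer search) for $K_{3,3}$, and both invoke Lemma~\ref{lem:MifZ} to reduce the $M$-matrix condition to checking that the off-diagonal entries of $\Lap^* = \iota^T\iota$ are nonpositive. You are in fact more careful than the paper on one point: the paper simply asserts that the displayed columns form a basis of $\Flow(G)$, whereas your determinant check --- $\det\Lap^* = 81 = |\tau(K_{3,3})|$ and $\det\Lap^* = 125 = |\tau(K_5)|$, which rules out a proper finite-index sublattice via the relation $\det\Lap^* = k^2\,|\tau(G)|$ --- actually certifies linear independence and full integral spanning at once, and is a worthwhile addition. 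One small factual correction: the columns of $\iota$ in Example~\ref{ex:K5} are not all supported on four edges; columns $4$ and $5$ are $5$-cycles and column $6$ is a triangle (consistent with Proposition~\ref{prop:triangles}, which forbids an all-triangle basis). Each column is nevertheless a circuit, which is all your argument needs.
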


\begin{proof}
For $K_5$ we take the basis for $\Flow(K_5)$ described in Example~\ref{ex:K5}.  There we saw that the associated dual Laplacian $\Lap^*$ was an $M$-matrix.

For $K_{3,3}$ we can choose a basis for $\Flow(K_{3,3})$ given by the columns of the matrix $\iota$ below.  The corresponding dual Laplacian $\Lap^* = \iota^T \iota$ is also indicated.

\[
    \iota = \begin{bmatrix}
        -1 & 0 & 1 & 0\\
         0 & 1 &-1 & 0\\
         1 &-1 & 0 & 0\\
         1 & 0 & 0 &-1\\
         0 &-1 & 1 & 1\\
        -1 & 1 &-1 & 0\\
         0 & 0 &-1 & 1\\
         0 & 0 & 0 &-1\\
         0 & 0 & 1 & 0
    \end{bmatrix},
    \hspace{.2 in}
    \Lap^* = \begin{bmatrix}
         4 &-2 & 0 &-1\\
        -2 & 4 &-3 &-1\\
         0 &-3 & 6 & 0\\
        -1 &-1 & 0 & 4
    \end{bmatrix}
\]

Here the vertex set of $K_{3,3}$ is given by bipartition $\{0,1,2\} \cup \{3,4,5\}$, and the rows in $\iota$ are labeled lexicographically $(03,04, \dots, 24,25)$.  See Figure \ref{fig:K33} for an illustration of the elements of the corresponding cycle $M$-basis.  From Lemma \ref{lem:MifZ} we see that $\Lap^*$ is an $M$-matrix.
\end{proof}

\begin{figure}[h] 

\includegraphics[scale = 1]{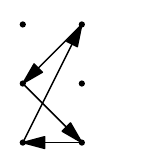}
\includegraphics[scale = 1]{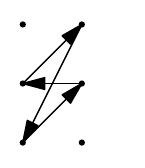}
\includegraphics[scale = 1]{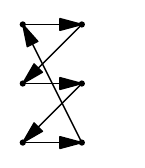}
\includegraphics[scale = 1]{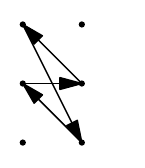}

\caption{A circuit $M$-basis for the graph $K_{3,3}$ from the proof of Proposition \ref{prop:circuitK5K33}.}
 \label{fig:K33}
   \end{figure}

\subsection{Obstructions to a cycle M-basis}

In this section we study restrictions on the structure of possible cycle $M$-bases for nonplanar graphs.  For instance, having found a cycle $M$-basis for the complete graph $K_5$ consisting of cycles (see Example \ref{ex:K5}) it is natural to ask whether it is possible to find a cycle $M$-basis consisting of all 3-cycles.  In fact this is not the case.

\begin{prop}\label{prop:triangles}
Let $n \geq 5$, and suppose $\Basis$ is a cycle $M$-basis for $K_n$. Then at least one element of $\Basis$ has more than 3 nonzero entries (so that at least one of the cycles is not a triangle).
\end{prop}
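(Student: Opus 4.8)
The plan is to argue by contradiction: suppose $\Basis = \{{\bf f}_1,\dots,{\bf f}_g\}$ is a circuit $M$-basis of $K_n$ (so $g=\binom{n-1}{2}$) in which \emph{every} ${\bf f}_i$ is a triangle, i.e. has exactly three nonzero entries. The first step is to translate the $Z$-matrix condition of Lemma~\ref{lem:MifZ} into the combinatorics of oriented triangles. Since $\Lap^* = \iota^T\iota$ must be a $Z$-matrix we need ${\bf f}_i\cdot{\bf f}_j \le 0$ for all $i\neq j$. Two distinct triangles of $K_n$ share at most one edge, and if ${\bf f}_i,{\bf f}_j$ share an edge $e$ then ${\bf f}_i\cdot{\bf f}_j = f_i(e)f_j(e) = \pm 1$; the $Z$-condition forces this value to be $-1$, i.e. the two triangles must traverse the shared edge in \emph{opposite} directions.

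From this I would extract a local constraint: each edge of $K_n$ lies in at most two basis triangles, since if three basis triangles shared an edge $e$ they would have to be pairwise oppositely oriented on $e$, impossible as an edge admits only two orientations. A global count of edge–triangle incidences then gives $3g = \sum_e (\#\text{triangles through } e) \le 2\binom{n}{2}$, which simplifies to $n \le 6$. Thus $n\ge 7$ is dispatched immediately, and the cases $n = 5$ and $n = 6$ are exactly where this crude count fails. This is the main obstacle, and the heart of the argument is a uniform way to rule out these two small cases.

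To finish, I would pass to the single vector $\Sigma \define \sum_{i=1}^g {\bf f}_i \in \Flow(K_n)$. Because the ${\bf f}_i$ form a basis they are linearly independent, so $\Sigma \ne 0$, and $\Sigma$ is a flow. The key observation is that $\Sigma$ is extremely sparse: on any edge contained in two basis triangles the opposite orientations cancel, contributing $0$, so $\Sigma$ is supported only on the set $E_1$ of edges lying in \emph{exactly one} basis triangle, with entries $\pm 1$ there. Writing $m_k$ for the number of edges in exactly $k$ basis triangles, the incidence relations $m_1 + 2m_2 = 3g$ and $m_0+m_1+m_2 = \binom{n}{2}$ give $m_1 = 2\binom{n}{2} - 3g - 2m_0 \le \tfrac{(n-1)(6-n)}{2} \le 2$ for all $n\ge 5$. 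On the other hand, the support of a nonzero flow in a simple graph must contain a cycle and hence at least three edges (the girth of $K_n$ is $3$), forcing $|E_1| = m_1 \ge 3$. This contradiction settles every $n\ge 5$ simultaneously.

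I expect the only delicate point to be the bookkeeping in this last step: verifying that doubly-covered edges genuinely cancel (which is where the opposite-orientation conclusion of the first step is used) and that the remaining support is provably too small. The conceptual move that makes the small cases tractable is to replace the global incidence inequality by the sparsity of the specific flow $\Sigma = \sum_i {\bf f}_i$, converting a counting bound into a statement about the girth of $K_n$; I would present the triangle/orientation translation and the edge-multiplicity bound first, then this flow argument as the decisive step.
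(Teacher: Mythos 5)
Your proof is correct and follows essentially the same route as the paper's: the same translation of the $Z$-matrix condition into opposite orientations on shared edges, the same conclusion that each edge of $K_n$ lies in at most two basis triangles, the same incidence count, and the same use of the vector $\sum_i {\bf f}_i$. The only real difference is organizational: where the paper splits into the cases $n>6$ (the count $\tfrac{3(n-1)(n-2)}{2} > n(n-1)$ alone), $n=6$ (the sum vanishes, contradicting independence), and $n=5$ (the sum has support of size two, too small to be a flow), you fold every $n\ge 5$ into a single argument via $m_1 \le \tfrac{(n-1)(6-n)}{2}\le 2$ combined with independence and the girth bound on the support of a nonzero flow --- a slightly cleaner and more uniform packaging of identical ingredients.
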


\begin{proof}
For a contradiction assume $\Basis = \{{\bf f}_1, \dots, {\bf f}_g\}$ is a cycle $M$-basis for $K_n$ consisting of all triangles. Let $\iota$ be the matrix with columns given by the elements of $\Basis$. In what follows we will also think of elements of $\Basis$ as oriented triangles.

We first claim that every row of $\iota$ must have either one nonzero entry or two nonzero entries with different signs.  To see this suppose that an edge $e$ is contained in three or more elements of $\Basis$. Then since any pair of triangles shares at most one edge, we have that some pair $\{{\bf f}_i, {\bf f}_j\}$ of these elements would have this edge oriented in the same direction.  Hence the corresponding $(i,j)$ entry in $\iota \iota^T$ would be positive, a contradiction.  

Next recall that the genus of $K_n$ is $\frac{(n-2)(n-1)}{2}$.  Hence since $\Basis$ is a cycle basis consisting only of triangles, we have that $\iota$ has a total of $\frac{3(n-2)(n-1)}{2}$ non-zero entries. However, because there is a maximum of $2$ entries per row and there are $\frac{n(n-1)}{2}$ edges, we see that there is a maximum of $n(n - 1)$ non-zero values. If $n > 6$, then $\frac{3(n-2)(n-1)}{2} > n(n - 1)$ and hence no such basis exists.

If $n = 6$, then $\frac{3(n-2)(n-1)}{2} = n(n -1) = 30$. In this case every edge must appear in exactly two triangles, with an opposite sign in each.  The sum of these cycles is the zero cycle, and hence the set $\Basis = \{{\bf f}_1, \dots, {\bf f}_g\}$ is linearly dependent. This is a contradiction to the assumption that $\Basis$ is a basis.

If $n = 5$, then we need 18 nonzero entries in 10 rows.  Hence we must have two rows in $\iota$ that have a single nonzero entry. But in this case the sum ${\bf f}_1 + \cdots + {\bf f}_g$ of the basis elements would be a vector with two nonzero entries, which cannot be an element of the flow space $\Flow(G) = \ker \tilde \partial$. The result follows.
\end{proof}

Note that since $K_4$ is planar with an embedding with bounded regions given by $3$-cycles, we see that $n \geq 5$ is tight.  A natural question to ask is how large the cycles must be in any cycle $M$-basis of a graph.  After a version of this paper was posted to the arXiv, Yuen and Zelesko \cite{YueZel} found a cycle $M$-basis for $K_{4,4}$ that does not use 8-cycles, as well as a cycle $M$-basis for $K_6$ that does not use $6$-cycles. We also note that

\begin{prop}\label{prop:fourcycles}
Let $3 \leq m \leq n$, and suppose $\Basis$ is a cycle $M$-basis for $K_{m,n}$. Then at least one element of $\Basis$ has more than $4$ nonzero entries (so that at least one of the cycles is not a $4$-cycle).
\end{prop}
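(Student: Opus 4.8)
The plan is to mimic the counting argument used for $K_n$ in Proposition~\ref{prop:triangles}, now adapted to the bipartite setting where the shortest circuits are $4$-cycles. Suppose for contradiction that $\Basis = \{{\bf f}_1, \dots, {\bf f}_g\}$ is a circuit $M$-basis for $K_{m,n}$ consisting entirely of $4$-cycles, and let $\iota$ be the matrix whose columns are the elements of $\Basis$. First I would establish the analogue of the ``at most two signed nonzero entries per row'' observation: since $\Lap^* = \iota^T\iota$ must be a $Z$-matrix, no edge $e$ can appear with the same induced sign in two basis $4$-cycles (that would force a positive off-diagonal entry), and moreover if an edge lies in three or more basis cycles then some pair must repeat a sign. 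The subtlety here versus the triangle case is that two distinct $4$-cycles in $K_{m,n}$ can share more than one edge, so I would need to check that sharing edges still forces a positive inner-product contribution unless the shared edges are all oppositely oriented — but if \emph{every} shared edge is oppositely oriented between two $4$-cycles, their symmetric difference gives a shorter relation, which I can use to control dependence. The cleanest route is to argue that each edge appears in at most two basis elements (with opposite signs), yielding at most two nonzero entries per row of $\iota$.

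With that constraint in hand, the core is a counting inequality. The genus of $K_{m,n}$ is $g = |E| - |V| + 1 = mn - (m+n) + 1 = (m-1)(n-1)$, and the number of edges is $mn$. If every basis element is a $4$-cycle then $\iota$ has exactly $4g = 4(m-1)(n-1)$ nonzero entries, while the bound of two nonzero entries per row over $mn$ rows gives at most $2mn$ nonzero entries. So I would compare $4(m-1)(n-1)$ against $2mn$: the strict inequality $4(m-1)(n-1) > 2mn$ simplifies to $2(mn - 2m - 2n + 2) > mn$, i.e. $mn - 4m - 4n + 4 > 0$, which holds for $m,n$ sufficiently large (certainly once $m,n \geq 5$ or so). For the boundary cases where the inequality becomes an equality or near-equality, I expect to need the same kind of linear-dependence endgame as in Proposition~\ref{prop:triangles}: if every edge appears in exactly two cycles with opposite signs, then $\sum_i {\bf f}_i = {\bf 0}$, contradicting independence; and if the count leaves a small deficit, then a few rows have a single nonzero entry, forcing $\sum_i {\bf f}_i$ to have nonzero entries and hence not lie in $\ker\tilde\partial = \Flow(G)$, again a contradiction.

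The main obstacle I anticipate is handling the small bipartite cases $3 \leq m \leq n$ where $m$ is small (especially $m = 3$ or $m = 4$), since there the crude counting inequality $4(m-1)(n-1) > 2mn$ may fail or only hold for large $n$, and the geometry of how $4$-cycles overlap in $K_{3,n}$ is more constrained and must be analyzed directly. For these I would fall back on the parity/linear-dependence arguments rather than pure counting: track how many edges appear once versus twice, observe that edges appearing exactly once contribute uncancelled coordinates to $\sum_i {\bf f}_i$, and derive that either $\sum_i {\bf f}_i \neq {\bf 0}$ fails to be a flow or else all edges are doubly covered forcing dependence. A secondary technical point to verify carefully is the claim that two $4$-cycles sharing two oppositely-oriented edges cannot both appear in an $M$-basis without creating either a positive off-diagonal entry elsewhere or a short linear dependence; I would want to rule out such pathological overlaps before the counting step is fully justified.
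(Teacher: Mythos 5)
Your plan has the same skeleton as the paper's proof (a $\le 2$-entries-per-row bound coming from the $Z$-matrix condition, a count of $4g$ nonzero entries against $2mn$, and a sum-of-columns argument at the boundary), but two of your steps have concrete problems. First, the algebra in the counting step is wrong: $4(m-1)(n-1) > 2mn$ simplifies to $mn - 2m - 2n + 2 > 0$, i.e.\ to $(m-2)(n-2) > 2$, not to $mn - 4m - 4n + 4 > 0$. With the correct reduction, pure counting already gives the contradiction for \emph{all} $3 \le m \le n$ except $(m,n) = (3,3)$ and $(3,4)$; there is no difficulty at $m=4$, and no need for ``$m,n \ge 5$ or so'' (indeed, under your inequality even $m=n=7$ would fail, so your own plan is internally inconsistent). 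The error matters because it routes a large family of cases to your fallback, and the fallback as stated has a genuine gap: $\sum_i {\bf f}_i$ is \emph{always} a flow, being a sum of flows, so the dichotomy ``either the sum fails to be a flow or all edges are doubly covered'' only yields a contradiction when the number $a$ of singly-covered edges satisfies $a = 0$ (linear dependence) or $0 < a < 4$ (a nonzero flow whose support is too small to contain a circuit of the bipartite graph). If $a \ge 4$, the sum can perfectly well be a nonzero flow and nothing follows. So the fallback cannot replace the counting; it must be paired with the exact count. Writing $b$ for the number of doubly-covered edges, one has $a + 2b = 4g$ and $a + b = mn$ (every edge must be covered, else $\Basis$ cannot span $\Flow(K_{m,n})$), hence $a = 2mn - 4(m-1)(n-1)$: this is $2$ for $K_{3,3}$ and $0$ for $K_{3,4}$, which finishes those two cases, and it is negative in all other cases, which is the counting contradiction restated.

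A smaller point: your worry about ``pathological overlaps'' is misplaced. Two $4$-cycles sharing two oppositely oriented edges are harmless and occur in genuine circuit $M$-bases (the paper's basis for $K_{3,3}$ has off-diagonal entries $-2$ and $-3$), so nothing of that sort can or should be ruled out. What must be checked---and this suffices---is that two distinct $4$-cycles in $K_{m,n}$ share at most two edges, and if two, the shared edges are adjacent; since each oriented cycle traverses two adjacent edges consistently, the contributions of the shared edges to ${\bf f}_i \cdot {\bf f}_j$ all carry the same sign and cannot cancel one another. That is exactly why a repeated sign on any shared edge forces a positive off-diagonal entry of $\Lap^*$, and why an edge lying in three or more basis cycles (two of which must orient it identically, by pigeonhole) is impossible. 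Your ``symmetric difference gives a shorter relation'' remark does not produce a linear dependence among basis elements and is not needed anywhere.
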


\begin{proof}
Again for the sake of contradiction, assume $\Basis = \{{\bf f}_1, \dots, {\bf f}_g\}$ is a cycle $M$-basis for $K_{m,n}$ consisting of all $4$-cycles.  Let $\iota$ be the matrix with columns given by the elements of $\Basis$. In what follows we will also think of elements of $\Basis$ as oriented $4$-cycles.

As in the proof of Proposition \ref{prop:triangles} there cannot exist an edge that appears in 3 or more element of $\Basis$.  Here a pair of $4$-cycles can share two edges but these must be adjacent and furthermore must be oriented in opposite direction in the respective $4$-cycles.  Hence three or more $4$-cycle involving these edges would lead to a positive entry in the corresponding entry of $\Lap^*$.

Suppose $m=3$. For this case the matrix $\iota$ must have $8(n-1)$ nonzero entries, with at most two of these entries in each of the $3n$ rows. Hence we need $8n-8 < 6n$, so that $n < 4 $. Note that if $8n-8 = 6n$ then we get a linear dependence among the elements of $\Basis$.

For the case of $K_{3,3}$ the total number of non-zero entries in $\iota$ is $16$, and there are $9$ edges in $K_{3,3}$. We've seen that the maximum number of possible entries in any row of $\iota$ row is two (a $-1$ and a $1$), and in order for $\Basis$ to span the space $\Flow(G)$ we need $\iota$ to have a nonzero entry in every row. Hence the only way to distribute the entries would be to have 2 entries in 7 rows and only one in the last 2 rows. Similar to the $K_5$ case in \ref{prop:triangles}, this can not create a cycle basis. 

For the general case of $K_{m,n}$ with $4 \le m, n$ we need $4(mn - m - n +1) = 4mn - 4m - 4n +1$ nonzero entries in $\iota$, with at most two of these appearing in any of the $mn$ rows.  Hence we need $2mn < 4n + 4m -4$, so that $(m-2)(n-2) < 2$. Since $m, n \geq 4$ it is evident that this is never true. The result follows.
\end{proof}

\section{Further thoughts and recent developments}\label{sec:final}

In this section we discuss some open questions and further directions.  Some of these have been mentioned above but we collect them here for convenience.  

\begin{question}\label{q:circuit}
Which graphs admit a cycle $M$-basis?
\end{question}
As we have seen, any planar graph, as well as the graphs $K_{5}$ and $K_{3,3}$, admit cycle $M$-bases.  In addition, Yuen and Zelesko \cite{YueZel} have recently found cycle $M$-bases for the the Peterson graph as well as for the matroid $R_{10}$ (the smallest binary matroid that is not graphic nor cographic). In the latter case the elements of the basis correspond to \emph{circuits} of the matroid. On the other hand Yuen and Zelesko have shown via computer search that $K_{3,5}$ does not admit a cycle $M$-basis.  It would be desirable to find a human proof of this result to understand what the obstruction to a cycle $M$-basis might be.

Is there a natural class of graphs that admit a cycle $M$-bases? What about complete graphs? It is unlikely that having a cycle $M$-basis is closed under taking minors but is there some other operation that preserves this property?

Recall that if $T$ is a spanning tree of the connected graph $G$, then any edge $e$ that is not contained in $T$ gives rise to a \defi{fundamental cycle} $C_e$, consisting of $e$ along with the path in $T$ connecting the endpoints of $e$.  The collection of such $C_e$ for $e \notin T$ gives rise to a basis for $\ker \partial$ called a \defi{fundamental cycle basis} (associated to $T$).  Another natural question to determine which graphs admit a fundamental cycle basis that is also a cycle $M$-basis. Yuen and Zelesko have also addressed this question in their work, see \cite{YueZel} for futher details.

\begin{question}
Can one find an explicit bijection between the set of $z$-superstable flow configurations and the set $\tau(G)$ of spanning trees?
\end{question}

After fixing a flow $M$-basis for a graph $G$, we have seen that the set of $z$-superstable flow configurations represent a unique member of each equivalence class determined by the underlying dual Laplacian $\Lap^*$.  These configurations can be interpreted as `minimal energy' representatives, or as flow configurations for which no superset can be fired.   From Proposition \ref{prop:zsuper} we see that the set of such $z$-superstable configurations is in bijection with the set of spanning trees.  One wonders if an explicit bijection exists in the spirit of those constructed in (for instance) \cite{ChePyl}.

\begin{question}
What do the $z$-superstable flow configurations count?
\end{question}

Since the set of $z$-superstable flow configurations are in bijection with the set of spanning trees, a natural question to ask is whether the collection of such configurations is related to some statistic on the spanning trees.  

In the case of $z$-superstable flow configurations arising from the dual graph $G*$ of a planar graph $G$ the number of $z$-superstable configurations of degree $d$ are given by the number of spanning trees of $G$ with $d$ internally passive edges.  This is not the case for general graphs, and for instance the degree of a $z$-superstable configuration can be larger than $|V(G)| - 1$ (see Example \ref{ex:zsuper}).  However, it would be interesting to see if there is some notion of activity of a spanning tree, dependent on the choice of $M$-basis, that does correspond to the degree sequence of $z$-superstable flow configurations.

%Regardless of the status of Question \ref{q:circuit}, we can ask if there are any restrictions on the sizes of elements in any cycle $M$-basis for certain families of graphs.  We have seen that the graphs $K_n$ and $K_{m,n}$ cannot have cycle $M$-bases with all `smallest' cycles.  Experimental evidence suggests the following.

%\begin{question}\label{ques:cycle}
%Is it true that any cycle $M$-basis for $K_n$ contains a $n$-cycle?  Does any cycle $M$-basis for $K_{m,n}$ with $m\le n$ contain a $2m$-cycle?
%\end{question}

Finally, much of the theory and applications discussed above make sense in the more general of setting of (totally) unimodular matrices and unimodular matroids.  Here the notion of a cycle would $M$-basis would correspond to a \emph{circuit} $M$-basis, where the elements of the basis correspond to circuits (minimally dependent sets) of the underlying matroid.  It would be interesting to pursue the above constructions in those settings.

%\subsection{Recent developments}
%After a version of this paper was posted to the ArXiv, several of the above questions have been answered.  In particular recent work of Chi Ho Yuen and Nathan Zelesko \cite{YueZel} has established the following.

\end{document}